\newcommand{\be}{\begin{eqnarray}}
\newcommand{\ee}{\end{eqnarray}}
\newcommand{\beno}{\begin{eqnarray*}}
\newcommand{\eeno}{\end{eqnarray*}}
\newcommand{\clos}{\mbox{\rm clos}}\newcommand{\e}{{\varepsilon}}\newcommand{\R}{{\mathbb R}}\newcommand{\Z}{{\mathbb Z}}
\newcommand{\M}{{\mathcal M}}
\newcommand{\om}{\omega}
\newtheorem{theorem}{Theorem}\newtheorem{lemma}[theorem]{Lemma}\newtheorem{cor}[theorem]{Corollary}\theoremstyle{definition}
\newtheorem{defi}[theorem]{Definition}
\newtheorem{assumption}[theorem]{Assumption}
\theoremstyle{remark}\newtheorem{remark}[theorem]{Remark}\numberwithin{equation}{section}\input epsf.sty
\begin{document}\thispagestyle{empty}

%
%
%
%
%
%
%
%
%
%
%
%
\newcommand{\Vol}{Vol}
\newcommand{\s}[0]{\theta}
\newcommand{\T}{{\mathbb T}}
\newcommand{\G}{{\mathcal G}}
\newcommand{\C}{{\mathbb C}}
\newcommand{\Pa}{{P_{1,\theta}(y)}}
\newcommand{\Pb}{{P_{2,\theta}(y)}}
\newcommand{\Pshp}{{P_{1,t}^\sharp(x)}}
\newcommand{\Pflt}{{P_{1,t}^\flat(x)}}
\newcommand{\OTL}{{\Omega(\theta,\ell)}}
\newcommand{\rsz}{{R(\theta^*)}}
\newcommand{\phitil}{{\tilde{\varphi}_t}}
\newcommand{\lambdatil}{{\tilde{\lambda}}}
\newcommand{\al}{\alpha}
\newcommand{\ftpI}{\langle (g^2 + \tau^2 f^2)^{\frac{p}{2}} \rangle_{I}}
\newcommand{\fI}{\langle f \rangle_{I}}
\newcommand{\fpI}{\langle |f|^p \rangle_{I}}
\newcommand{\gI}{\langle g \rangle_{I}}
\newcommand{\mtp}{|(f,H_J)| = |(g,H_J)|}
\newcommand{\fJp}{\langle f \rangle_{J^+}}
\newcommand{\fJm}{\langle f \rangle_{J^-}}
\newcommand{\gJp}{\langle f \rangle_{J^+}}
\newcommand{\gJm}{\langle f \rangle_{J^-}}
\newcommand{\fIpm}{\langle f \rangle_{J^{\pm}}}
\newcommand{\gIpm}{\langle g \rangle_{J^{\pm}}}
\newcommand{\fIp}{\langle f \rangle_{I^{+}}}
\newcommand{\fIm}{\langle f \rangle_{I^{-}}}
\newcommand{\gIp}{\langle g \rangle_{I^{+}}}
\newcommand{\gIm}{\langle g \rangle_{I^{-}}}
\newcommand{\yone}{y_1}
\newcommand{\ytwo}{y_2}
\newcommand{\yonep}{y_1^+}
\newcommand{\ytwop}{y_2^+}
\newcommand{\yonem}{y_1^-}
\newcommand{\ytwom}{y_2^-}
\newcommand{\ythr}{y_3}

\newcommand{\xone}{x_1}
\newcommand{\xtwo}{x_2}
\newcommand{\xonep}{x_1^+}
\newcommand{\xtwop}{x_2^+}
\newcommand{\xonem}{x_1^-}
\newcommand{\xtwom}{x_2^-}
\newcommand{\xthr}{x_3}
\newcommand{\ptwo}{\frac{p}{2}}
\newcommand{\alp}{\al^+}
\newcommand{\alm}{\al^-}
\newcommand{\alpm}{\al^{\pm}}
\newcommand{\twrp}{\frac{2}{p}}
\newcommand{\sbeta}{\sqrt{\om^2 - \tau^2}}
\newcommand{\ssbeta}{\om^2 - \tau^2}
\newcommand{\sign}{\operatorname{sign}}
\newcommand{\rst}[1]{\ensuremath{{\mathbin\upharpoonright}%

\newcommand{\N}[0]{{\mathbb N}}
\newcommand{\Z}[0]{{\mathbb Z}}
\newcommand{\M}[0]{{\mathbb M}}
\newcommand{\cK}[0]{{\mathcal K}}
\newcommand{\f}[0]{\varphi}
\newcommand{\pd}[0]{\partial}

\raise-.5ex\hbox{$#1$}}}

\title[Lower bounds for vector-valued and matrix-valued multipliers in $L^p$]{{Sharp Lower bound estimates for vector-valued and matrix-valued multipliers in $L^p$}}
\author{Nicholas Boros}\address{Nicholas Boros, Dept. of Math., Michigan State University.
{\tt borosnic@msu.edu}}
\author{Alexander Volberg}\address{Alexander Volberg, Dept. of  Math., Michigan State University.
{\tt volberg@math.msu.edu}}

\begin{abstract}We generalize the idea of a multiplier in two different ways.  First of all, we consider multipliers in the form of a vector acting on a scalar function.  Using this technique we are able to show that for $\tau^2 \leq p^*-1$ and $1<p<2$ or $\tau \in \R$ and $2 \leq p < \infty$, $\left\|\left(\begin{array}{c}
ReB\\
\tau I \end{array} \right)\right\|_{L^p(\C,\C) \to L^p(\C,\C^2)} \geq ((p^*-1)^2+\tau^2)^{\frac 12},$ where $B$ is the Ahlfors--Beurling operator, $I$ is the identity operator and $p^*-1 = \max\left\{p-1, \frac{1}{p-1}\right\}.$  Secondly, we consider matrix--valued multipliers to obtain a new proof showing that $\|B\|_{L^p(\C) \to L^p(\C)} \geq p^*-1.$
\end{abstract}
\maketitle

\section{\bf Introduction}

The results of Geiss, Montgomery-Smith, Saksman \cite{Saks} generalized from an idea of Bourgain \cite{Bo} seems to be quite robust.  Bourgain originally used this technique for the Hilbert transform, to get an $L^p$ lower bound in terms of $UMD$ constant.  Geiss, Montgomery-Smith, Saksman were able to generalize this quite a bit further to the class of real-valued, even multipliers that are continuous and homogoenous of order zero.  The argument, with a little bit of effort will generalize even further.  We are able to get a lower bound of an operator corresponding to either vector-valued multipliers or matrix-valued multipliers also in terms of the natural generalization of $UMD.$  With matrix valued multipliers, one can get the lower bound of an $L^p$ operator corresponding to a complex multiplier in terms of $UMD.$  We present the Ahlfors--Beurling operator as such an example. 

\section{\bf Vector--valued Multipliers}

\subsection{\bf Definitions and Notation}
Let $1 \leq p,p_0 \leq \infty$ and $m:\R^n \to \C$ be bounded function.  We say that $m$ is the multiplier corresponding with the operator $T_m$, if $T_mf:= (m \widehat{f})^{\vee}$ is a bounded operator from $L^p(\R^n) \to L^{p_0}(\R^n)$, considering functions $f \in \mathcal{S}$. Where $\,\,\widehat{} \,\,$ and $^\vee$ are the Fourier transform and its inverse and $\mathcal{S}$ is the Schwartz class.  The space of all such $m$ is denoted $\mathscr{M}_{p,p_0}(\R^n)$ with norm $\|m\|_{\mathscr{M}_{p,p_0}(\R^n)} := \|T_m\|_{L^p \to L^{p_0}}.$

Similarly, for $1 \leq p,p_0 \leq \infty$ we can consider a bounded sequence $a = \{a_k\}_{k \in \Z^n} \subset \C$.  We say that $a$ is the discrete multiplier corresponding the operator $T_a$, if 
$$(T_a f)(\theta):= (a \widehat{f})^{\vee}(\theta) = \sum_{k \in \mathbf{Z^n}}{a_k \widehat{f}(k)e^{i(k,\theta)}}$$ 
is a bounded operator from $L^p(\T^n) \to L^{p_0}(\T^n)$, considering functions $f$ being trigomometric polynomials on $\T^n := [-\pi, \pi)^n$.  The space of all such $a$ is denoted $\mathscr{M}_{p,p_0}(\Z^n)$ with norm $\|a\|_{\mathscr{M}_{p,p_0}(\Z^n)} := \|T_m\|_{L^p \to L^{p_0}}.$  In this paper, we will just take $a_k = m(k)$ and call $\widetilde{m}$ the discrete multiplier under this restriction so that the operator is defined as 
$(T_{\widetilde{m}}f)(\theta)= \sum_{k \in \mathbf{Z^n}}{m(k) \widehat{f}(k)e^{i(k,\theta)}}$.
 
Generalizing the idea of a muliplier, let us consider $M = \{m_{i}\}_{1 \leq i \leq m}$ as a vector-valued function whose entries are bounded functions from $\R^d$ to $\C$ and $p,p_0$ such that $1 \leq p,p_0 \leq \infty$.  We say that $M$ is the multiplier corresponding to the operator $T_M$, if $T_Mf:= (M \widehat{f})^{\vee}$ (the inverse Fourier transform defined componentwise) is a bounded operator from $L^p(\R^d,\C)$ to $L^{p_0}(\R^d,\C^m)$, considering functions $f$ from $\R^d$ to $\C$ in $\mathcal{S}$.  Note that $\widehat{T_Mf} = \widehat{f}M$ as an extension of the scalar case and $\|f\|_{L^{p_0}(\R^d,\C^m)}^{p_0} := \int_{\R^d}{\|f(x)\|_{\C^m}^{p_0} \, dx}$.  We use the notation $\widetilde{M} = (\widetilde{m}_{i})_{1 \leq i \leq m}$ to denote the discrete multiplier.  

\begin{remark}
We will use the notation $\mathscr{V}_{m}^{p,p_0}(\R^d)$ to denote the class of all continuous (except possibly at zero) and homogeneous of order zero vector multipliers, where $\|M\|_{\mathscr{V}_{m}^{p,p_0}} := \|T_M\|_{L^p(\R^d, \C) \to L^{p_0}(\R^d, \C^m)}$ for any such $M \in \mathscr{V}_{m}^{p,p_0}(\R^d).$
\end{remark}

\subsection{\bf Operator Norm of a Quadratic Perturbation of Martingale Transform}
\label{}

Let $\{r_n\}_{n \geq 0}$ the Rademacher sequence.  Let $\{F_N\}_{N \geq 0}$ and $\{G_N\}_{N \geq 0}$ be $\C$-valued martingale difference sequences of the form 
$$F_N = \sum_{k=1}^N{d_k(r_0,\dots,r_{k-1})r_k},\,\,\,\, G_N = \sum_{k=1}^N{\beta_k d_k(r_0,\dots,r_{k-1})r_k}$$
where $d_{k}:\{\pm 1\}^k \to \C$ is integrable and $\vec{\beta}$ is a vector with entries $\beta_{i} \in {\pm 1}$.  For any particular such $\vec{\beta}$, $MT_{\vec{\beta}}(F_N) := G_N$ is a martingale transform.  Let $\tau \in \R$ and $\left(\begin{array}{c}
MT_{\vec{\beta}}\\
\tau I \end{array} \right): L^p(\C) \to L^{p_0}(\C^2)$ be defined as $F_N \mapsto \left(\begin{array}{c}
G_N\\
\tau F_N \end{array} \right).$  Then 

\be \label{UMDvectordef}
\left\|\left(\begin{array}{c}
MT_{\vec{\beta}}\\
\tau I \end{array} \right)\right\|_{L^p(\C) \to L^{p_0}(\C^2)} = \sup_{\vec{\beta}}\frac{\left\|\sum_{k=1}^N{\left(\begin{array}{c}
\beta_k\\
\tau \end{array}\right) d_k(r_0,\dots,r_{k-1})r_k}\right\|_{p_0}}{\|\sum_{k=1}^N{d_k(r_0,\dots,r_{k-1})r_k}\|_p}.
\ee

\begin{defi}
\label{UMDpp0}
$C_{p,p_0}^{\tau} := \left\|\left(\begin{array}{c}
MT_{\vec{\beta}}\\
\tau I \end{array} \right)\right\|_{L^p(\C) \to L^{p_0}(\C^2)}.$
\end{defi}

Note that $C_{p,p}^{\tau} = ((p^*-1)^2+\tau^2)^{\frac 12}$ for $\tau^2 \leq p^*-1$ and $1<p<2$ or $\tau \in \R$ and $2 \leq p < \infty$ and shown in \cite{BJV1}, \cite{BJV2} and \cite{BJV3},where $p^*=\max\{p,\frac{p}{p-1}\}$.  

\subsection{\bf Transference}
\label{transference}

The following results are called transference properties of multipliers.  The main result that we will prove is a lower bound for $\|T_M\|_{L^p(\R) \to L^{p_0}(\R^d)}.$  Transference allows us to reduce this to getting a lower bound for $\|T_M\|_{L^p(\T) \to L^{p_0}(\T^d)},$ which is easier to work with. 

\begin{lemma}
\label{transference1}
Suppose $1 < p_0 \leq p < \infty$ and $M \in \mathscr{V}_{m}^{p,p_0}(\R^d),$ then $\|T_{M}\|_{L^{p}(\R^d,\C) \to L^{p_0}(\R^d,\C^m)} \geq \|T_{\widetilde{M}}\|_{L^{p}(\T^d,\C) \to L^{p_0}(\T^d,\C^m)}$.
\end{lemma}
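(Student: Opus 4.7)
The plan is to adapt the classical de Leeuw / Jodeit transference argument to the vector-valued setting.

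\textbf{Reduction.} A dilation argument shows that the hypothesis $M \in \mathscr{V}_{m}^{p,p_0}(\R^d)$ together with homogeneity of order zero forces either $M \equiv 0$ or $p = p_0$. Indeed, writing $f_\lambda(x) := f(\lambda x)$, the order-zero homogeneity gives $T_M(f_\lambda) = (T_M f)_\lambda$, so
$$\frac{\|T_M f_\lambda\|_{L^{p_0}}}{\|f_\lambda\|_{L^p}} = \lambda^{d(1/p - 1/p_0)}\,\frac{\|T_M f\|_{L^{p_0}}}{\|f\|_{L^p}};$$
for $p_0 < p$ this is unbounded as $\lambda \to 0^+$, forcing $T_M \equiv 0$ and making the inequality trivially $0 \geq 0$. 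Thus I may assume $p = p_0 \in (1,\infty)$.

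\textbf{Construction and key estimate.} Fix a trigonometric polynomial $f(\theta) = \sum_{k \in A} c_k e^{ik\cdot\theta}$ on $\T^d$ with finite support $A \subset \Z^d$, a Schwartz bump $g \in \mathcal{S}(\R^d)$ (say a Gaussian), and $\eps > 0$. Define
$$F_\eps(x) := g(\eps x)\,f(x), \qquad H_\eps(x) := g(\eps x)\,(T_{\widetilde{M}}f)(x),$$
viewing $f$ as a periodic function on $\R^d$. A direct calculation gives $\widehat{F_\eps}(\xi) = \eps^{-d}\sum_{k \in A} c_k\,\widehat{g}(\eps^{-1}(\xi-k))$, so $\widehat{F_\eps}$ concentrates in $\eps$-neighborhoods of the lattice points $k \in A$. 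Therefore
$$\widehat{T_M F_\eps - H_\eps}(\xi) = \eps^{-d}\sum_{k \in A}c_k\bigl(M(\xi) - M(k)\bigr)\widehat{g}\bigl(\eps^{-1}(\xi-k)\bigr),$$
and the uniform continuity of $M$ on any compact set bounded away from the origin, together with the localization and rapid decay of $\widehat{g}$, yields $\|T_M F_\eps - H_\eps\|_{L^{p}(\R^d,\C^m)} = o(\eps^{-d/p})$ as $\eps \to 0$.

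\textbf{Asymptotics and conclusion.} An equidistribution argument for the rapidly oscillating periodic factors against the slow envelope $|g(\eps\cdot)|^p$ gives
$$\eps^{d}\|F_\eps\|_{L^p(\R^d)}^p \to (2\pi)^{-d}\|g\|_{L^p}^p\|f\|_{L^p(\T^d)}^p \quad \text{as } \eps \to 0,$$
and likewise for $H_\eps$ with $f$ replaced by $T_{\widetilde{M}}f$. Combining this with the previous paragraph,
$$\|T_M\|_{L^p(\R^d,\C)\to L^{p}(\R^d,\C^m)} \geq \frac{\|T_M F_\eps\|_{L^p}}{\|F_\eps\|_{L^p}} \longrightarrow \frac{\|T_{\widetilde{M}}f\|_{L^{p}(\T^d)}}{\|f\|_{L^p(\T^d)}},$$
and taking the supremum over $f$ concludes. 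The main obstacle is the error estimate: converting pointwise continuity of $M$ into $L^p$ control of the Fourier integral requires splitting the finite sum over $k \in A$ and using the rapid decay of $\widehat g$ at infinity to compensate the prefactor $\eps^{-d}$, and separately handling the possibly isolated singularity of $M$ at the origin (first assume $0 \notin A$ and then recover the general case, since the constant term contributes $c_0 M(0)$ identically on both sides).
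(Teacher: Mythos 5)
Your dilation reduction is a nice observation that the paper does not make: since $M$ is homogeneous of order zero, $T_M$ commutes with dilations, and the scaling $\|T_M f_\lambda\|_{p_0}/\|f_\lambda\|_p=\lambda^{d(1/p-1/p_0)}\|T_Mf\|_{p_0}/\|f\|_p$ indeed forces $T_M\equiv 0$ when $p_0<p$; so effectively only $p=p_0$ matters. Your main argument is a de Leeuw--type transference by direct $L^p$ comparison of $T_M F_\e$ with $H_\e=g(\e\cdot)T_{\widetilde M}f$, whereas the paper (following Grafakos) works by duality: it tests against a second trigonometric polynomial $Q$, proves the scalar identity $\int_{\T^d}(T_{\widetilde M}P,Q)\,dx=\lim_{\e\to0}\e^{d/2}\int_{\R^d}(T_M(PL_{\e/p_0}),QL_{\e/q_0})\,dx$ on monomials, and then finishes with H\"older and a Poisson--summation limit. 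The duality route has the advantage that the only estimate needed on the error is on a scalar quantity (the pairing), where continuity of $M$ plus Gaussian concentration give the limit directly.

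The gap in your argument is the error estimate $\|T_M F_\e-H_\e\|_{L^p}=o(\e^{-d/p})$. Writing things out, for $k\neq 0$ the $k$-th term of $T_M F_\e-H_\e$ is $c_k e^{ik\cdot x}h_\e^k(\e x)$ with $h_\e^k=T_{\nu_\e}g$, $\nu_\e(\zeta)=M(\e\zeta+k)-M(k)$, so you need $\|h_\e^k\|_{L^p}\to0$. Continuity of $M$ at $k$ and Gaussian decay of $\widehat g$ give $\|\nu_\e\widehat g\|_{L^1}\to0$ and $\|\nu_\e\widehat g\|_{L^2}\to0$, hence $\|h_\e^k\|_{L^\infty}\to0$ and $\|h_\e^k\|_{L^2}\to0$; interpolating then handles $2\le p<\infty$. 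But for $1<p<2$ there is no a priori lower--endpoint bound: $\nu_\e$ is merely bounded and continuous, so there is no uniform control of $\|T_{\nu_\e}g\|_{L^1}$ (or of $\|h_\e^k\|_{L^q}$ for $q<2$), and pointwise/Fourier-side smallness does not convert into $L^p$ smallness when $p<2$. ``Rapid decay of $\widehat g$ plus continuity of $M$'' does not, by itself, close this. The standard fixes are either to pass to the adjoint $T_{M^*}$ acting on $L^{p'}$ with $p'>2$ (which requires restating the estimate for the $\C^m\to\C$ multiplier), or to abandon the direct $L^p$ comparison and argue by duality against $Q$ as the paper does. You should supply one of these; as written, the $1<p<2$ case is unproved.
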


\begin{proof}
The proof that we present here is a generalization of the proof in \cite{Graf}, on pp. 221--223, for $m$ being a scalar valued multiplier, so we will briefly describe the difference in the argument needed.  We claim that for $P$ and $Q$ being trigonometric polynomials, $L_{\e}(x) = e^{-\pi \e |x|^2}$ and $\frac{1}{p_0} + \frac{1}{q_0}=1$ we have
$$\int_{\T^d}{((T_{\widetilde{M}}P)(x),Q(x))dx}\\ = \lim_{\e \to 0} \e^{\frac{n}{2}} \int_{\R^n}{(T_{M}(PL_{\frac{\e}{p_0}})(x), Q(x)L_{\frac{\e}{q_0}}(x))dx}.$$
Indeed, by linearity we can just consider $P(x) = e^{2\pi i (j,x)}$ and $Q(x) = e^{2\pi i (k,x)}b,$ where $b \in \C^m.$  By Parseval's identity we have that $\int_{\T^d}{(T_{\widetilde{M}}P(x), Q(x))_{\C^m}dx} = (M(j), b)_{\C^m}\delta_{jk}.$  On the other hand 
\beno
&\,\,& \e^{\frac d2} \int_{\R^d}{(T_M(PL_{\frac \e p_0})(x), Q(x)L_{\frac \e q_0}(x))_{\C^m}dx}\\ 
&=& \left(\frac{\e}{p_0 q_0}\right)^{-\frac d2}\int_{\R^d}{(M(\xi), b)_{\C^m} e^{-\frac{-p_0 \pi |\xi -j|^2}{\e}}e^{-\frac{-q_0 \pi |\xi -k|^2}{\e}}d\xi} =: I 
\eeno
We get that $I \to \delta_{jk}$ as $\e \to 0$ proving the claim.

Now we consider $P$ and $Q$ as trigonometric polynomials, not just the particular monomials used in the proof of the claim.  Let $\frac{1}{p} + \frac{1}{q}=1$ and now we can estimate and get

\beno
\left|\int_{\T^n}{((T_{\widetilde{M}}P)(x),Q(x))dx}\right|\\
&=& \left|\lim_{\e \to 0} \e^{\frac{d}{2}} \int_{\R^d}{(T_{M}(PL_{\frac{\e}{p_0}})(x), Q(x))_{\C^m}L_{\frac{\e}{q_0}}(x)dx}\right|\\ 
& \leq &  \limsup_{\e \to 0}\e^{\frac{d}{2}}\left\|T_{M}(PL_{\frac{\e}{p_0}})\right\|_{L^{p_0}(\R^d)} \left\|QL_{\frac{\e}{q_0}}\right\|_{L^{q_0}(\R^d)}\\ 
& \leq & \limsup_{\e \to 0} \|T_M\|\left\|\e^{\frac{d}{2p_0}}PL_{\frac{\e}{p_0}}\right\|_{L^{p}(\R^d)} \left\|\e^{\frac{d}{2q_0}}QL_{\frac{\e}{q_0}}\right\|_{L^{q_0}(\R^d)}\\ 
&=& \|T_M\|\left\|P\right\|_{L^{p}(\T^d)} \left\|Q\right\|_{L^{q_0}(\T^d)},
\eeno
The last line uses the fact that for a continuous function $g$ the Poisson summation formula gives
\beno
\lim_{\e \to 0} \e^{\frac{dp}{2p_0}}\int_{\R^d}{g(x)e^{-\frac{\e p\pi |x|^2}{p_0}}dx} = \int_{\T^d}{g(x)dx}
\eeno
and similarly with $q, q_0$ replacing $p, p_0.$  Note that $\|T_M\| = \|T_M\|_{L^{p}(\R^d,\C) \to L^{p_0}(\R^d,\C^m)}.$
\end{proof}

\begin{lemma}
\label{transference2}
Suppose $1 < p \leq p_0 < \infty$ and $M \in \mathscr{V}_{m}^{p,p_0}(\R^d),$ then $\|T_{M}\|_{L^{p}(\R^d,\C) \to L^{p_0}(\R^d,\C^m)} \leq \|T_{\widetilde{M}}\|_{L^{p}(\T^d,\C) \to L^{p_0}(\T^d,\C^m)}$.
\end{lemma}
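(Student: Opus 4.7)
My plan is to run the argument of Lemma \ref{transference1} in reverse, exchanging the roles of $\R^d$ and $\T^d$. By duality, the operator norm $\|T_M\|_{L^p(\R^d,\C)\to L^{p_0}(\R^d,\C^m)}$ equals $\sup|\langle T_M f, g\rangle|$ where the supremum is taken over $f\in\schw(\R^d,\C)$ with $\|f\|_{L^p}\leq 1$ and $g\in\schw(\R^d,\C^m)$ with $\|g\|_{L^{q_0}}\leq 1$, and $\frac{1}{p_0}+\frac{1}{q_0}=1$. It therefore suffices to bound each such pairing by $\|T_{\widetilde M}\|_{L^p(\T^d,\C)\to L^{p_0}(\T^d,\C^m)}$.

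The key step is a counterpart to the claim in the proof of Lemma \ref{transference1}: the continuous pairing $\langle T_M f, g\rangle=\int_{\R^d}(T_Mf(x),g(x))_{\C^m}\,dx$ should be realized as an $\e\to 0$ limit of discrete pairings $\int_{\T^d}(T_{\widetilde M}f_\e(\theta),g_\e(\theta))_{\C^m}\,d\theta$, where $f_\e$ and $g_\e$ are trigonometric polynomial approximants obtained by periodizing Gaussian truncations of $f$ and $g$. On the Fourier side this is a Riemann-sum identity: the integral $\int M(\xi)\widehat{f}(\xi)\overline{\widehat{g}(\xi)}\,d\xi$ is approximated by lattice sums $\sum_{k\in\Z^d}M(k)\widehat{f_\e}(k)\overline{\widehat{g_\e}(k)}$, the approximation being good because $M$ is homogeneous of degree zero and continuous away from the origin, while $\widehat{f}$ and $\widehat{g}$ are Schwartz. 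Bounding each discrete pairing by $\|T_{\widetilde M}\|\cdot\|f_\e\|_{L^p(\T^d)}\cdot\|g_\e\|_{L^{q_0}(\T^d)}$ via H\"older and invoking the Poisson summation formula to convert the torus norms back into $\R^d$ norms of $f$ and $g$ in the limit would produce the desired inequality.

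The hard part will be identifying the correct scaling in the Poisson-summation step. In Lemma \ref{transference1} the constraint $p_0\leq p$ was exactly what forced the Gaussian-damping scaling factors to land with the correct sign on the right-hand side of H\"older's inequality; here the reversed constraint $p\leq p_0$ should flip those exponents and produce the reversed inequality. A secondary subtlety is the potential discontinuity of $M$ at zero: one must separate out the $k=0$ term in the Riemann sum, but because $M$ is bounded and $\widehat{f},\widehat{g}$ decay rapidly, this contribution should be negligible as $\e\to 0$.
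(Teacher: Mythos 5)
Your high-level framework is the right one and matches the route the paper intends: the paper's entire ``proof'' of Lemma \ref{transference2} is a reference to the converse transference argument in Grafakos (pp.\ 223--225 of \cite{Graf}), which indeed proceeds by dualizing the $\R^d$ pairing, replacing it by a limit of torus pairings, and controlling torus norms via Poisson summation. You also correctly flag the two genuine subtleties --- the exponent bookkeeping and the possible discontinuity of $M$ at the origin, which is handled because $\widehat{f},\widehat{g}$ decay and $M$ is bounded.

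However, there is a concrete gap in the mechanism you propose, and it lies exactly at the step you yourself mark as ``the hard part.'' You say $f_\e$ and $g_\e$ are obtained by ``periodizing Gaussian truncations of $f$ and $g$,'' and you want the lattice sum $\sum_{k\in\Z^d} M(k)\widehat{f_\e}(k)\overline{\widehat{g_\e}(k)}$ to be a Riemann sum for $\int_{\R^d} M(\xi)\widehat{f}(\xi)\overline{\widehat{g}(\xi)}\,d\xi$. But periodizing $fL_\e$ and sampling its Fourier transform at $\Z^d$ only produces $\widehat{f_\e}(k)=\widehat{fL_\e}(k)\to\widehat{f}(k)$ as $\e\to 0$: the lattice $\Z^d$ never refines, so the resulting quantity $\sum_{k}M(k)\widehat{f}(k)\overline{\widehat{g}(k)}$ is not a Riemann sum at all. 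Gaussian damping is the correct device for Lemma \ref{transference1} (where trigonometric polynomials must be forced to decay on $\R^d$); it is not what produces the approximating mesh in the converse direction. What is actually required is a \emph{dilation}: replace $f,g$ by $f(\lambda\,\cdot),g(\lambda\,\cdot)$ (equivalently, work on the torus of period $2\pi/\lambda$), so that the Fourier side samples $\widehat{f},\widehat{g}$ on the fine lattice $\lambda\Z^d$, producing $\lambda^d\sum_k M(\lambda k)\widehat{f}(\lambda k)\overline{\widehat{g}(\lambda k)}\to\int M\widehat{f}\,\overline{\widehat{g}}\,d\xi$ as $\lambda\to 0$. Homogeneity of degree zero is then used precisely to replace $M(\lambda k)$ by $M(k)$, so the discrete multiplier on the torus remains $\widetilde{M}$ independently of $\lambda$; without that substitution the bound is only in terms of a family of rescaled discrete multipliers, as in Grafakos's general theorem. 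Two further remarks: the scalings of $f(\lambda\,\cdot)$ and $g(\lambda\,\cdot)$ needed to make the torus $L^p$ and $L^{q_0}$ norms converge to $\|f\|_{L^p(\R^d)}$ and $\|g\|_{L^{q_0}(\R^d)}$ are forced, and one then checks that these choices make the pairing prefactor $\lambda^{d(1/p_0-1/p)}$; this is bounded as $\lambda\to 0$ exactly when $p\leq p_0$, which is where that hypothesis enters. Finally, note that for $p<p_0$ a standard dilation argument shows $\|T_M\|_{L^p(\R^d)\to L^{p_0}(\R^d)}=\infty$ for any nonzero homogeneous $M$, so the lemma carries genuine content only at $p=p_0$; your argument should be written with that in mind.
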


The proof of this is a similar modification to the scalar case (see pp. 223 -- 225 in \cite{Graf}) as was just shown in Lemma \ref{transference1}.

\subsection{\bf Lower Bound of Vector-Valued Multipliers} \label{lowerbound}

Let $Q = \T^d$ and $\mathscr{P}_k$ be the trigonometric polynomials from $Q^k$ to $\C$.  The following two theorems are generalizations of the results in \cite{Saks}.  We give a slightly different argument to both that also require some modifications in the more general setting.

\begin{theorem}
\label{bourgain}
Let $M \in \mathscr{V}_{m}^{p, p_0}(\R^d)$ be the multiplier corresponding to $T_M$ and 
$$T_{\widetilde{M}}^{k}\left(\sum_{j_1 \in \Z^n} \dots \sum_{j_k \in \Z^n}{x_{j_1, \dots, j_k}e^{i (j_1,\theta_{1})} \dots e^{i (j_k,\theta_{k})}}\right) :=
\sum_{j_1 \in \Z^n} \dots \sum_{j_k \in \Z^n}{M(j_k)x_{j_1, \dots, j_k}e^{i (j_1,\theta_{1})} \dots e^{i (j_k,\theta_{k})}}$$
If $\Phi_{k} \in E_k = \clos_{L^p}\{f \in \mathscr{P}_k: \int_{Q}{f(\theta_{1}, \dots, \theta_{k}) d\theta_{k}}=0\}$ and $1<p_0\leq p<\infty,$ then 
$$\left\|\sum_{k=1}^{N}{T_{\widetilde{M}}^{k}\Phi_k(\theta_{1}, \dots, \theta_{k})}\right\|_{L^{p_0}(Q^N)} \leq \|T_{\widetilde{M}}\|_{L^{p}(Q,\C) \to L^{p_0}(Q,\C^m)} \left\|\sum_{k=1}^{N}{\Phi_k(\theta_{1}, \dots, \theta_{k})}\right\|_{L^{p}(Q^N)}$$
for all $\Phi_{1} \in E_{1}, \dots, \Phi_{N} \in E_{N}$
\end{theorem}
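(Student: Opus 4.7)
The plan is to mimic the Bourgain-style ``lacunary diagonal'' argument used in \cite{Saks}, adapted to the vector-valued setting. By density I first reduce to the case where each $\Phi_k$ is a genuine trigonometric polynomial; the condition $\Phi_k \in E_k$ then forces its Fourier expansion to use only frequencies $(j_1,\dots,j_k)$ with $j_k \in \Z^d \setminus \{0\}$. Fix a common degree bound $D$ for all the $\Phi_k$, so only finitely many Fourier modes are in play.

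The core step is to pick a rapidly increasing sequence of positive integers $1 = M_1 \ll M_2 \ll \cdots \ll M_N$ and form the single-variable object $F_N(\theta) := \sum_{k=1}^N \Phi_k(M_1\theta, M_2\theta, \dots, M_k\theta)$, which is the restriction of $G := \sum_{k=1}^N \Phi_k$ (viewed on $Q^N$, with each $\Phi_k$ depending only on its first $k$ coordinates) along the lacunary diagonal $\theta \mapsto (M_1\theta,\dots,M_N\theta)$. Character orthogonality gives, for any trigonometric polynomial $\Psi$ on $Q^N$ of fixed degree, that $\int_Q \Psi(M_1\theta,\dots,M_N\theta)\,d\theta = \int_{Q^N}\Psi$ once the $M_k$ are lacunary enough relative to that degree; approximating the continuous function $|G|^p$ uniformly by trigonometric polynomials on $Q^N$ upgrades this to $\|F_N\|_{L^p(Q)} \longrightarrow \|\sum_k \Phi_k\|_{L^p(Q^N)}$.

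Now apply $T_{\widetilde{M}}$ to $F_N$. The substitution sends a pure mode $e^{i(j_1,\theta_1)+\cdots+i(j_k,\theta_k)}$ of $\Phi_k$ to $e^{i(M_1 j_1+\cdots+M_k j_k,\theta)}$, which $T_{\widetilde{M}}$ multiplies by $M(M_1 j_1+\cdots+M_k j_k)$. Invoking homogeneity of degree zero, this equals $M\bigl(j_k+\sum_{i<k}(M_i/M_k)j_i\bigr)$. Here the mean-zero hypothesis is essential: $j_k$ ranges over the \emph{compact} set $\{v\in\Z^d:1\leq|v|\leq D\}$, which is bounded away from $0$, so $M$ is uniformly continuous there, while the perturbation $\sum_{i<k}(M_i/M_k)j_i$ can be made arbitrarily small by additional lacunarity. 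Thus for any prescribed $\eta>0$ one obtains, on the finitely many Fourier coefficients that arise, $\|M(M_1 j_1+\cdots+M_k j_k)-M(j_k)\|_{\C^m}<\eta$, and consequently $T_{\widetilde{M}} F_N$ differs from $\sum_{k=1}^N (T^k_{\widetilde{M}}\Phi_k)(M_1\theta,\dots,M_k\theta)$ by at most $\eta\cdot\mathrm{const}$ in $L^\infty(Q,\C^m)$, hence in $L^{p_0}(Q,\C^m)$.

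Combining the operator bound $\|T_{\widetilde{M}} F_N\|_{L^{p_0}(Q,\C^m)}\leq \|T_{\widetilde{M}}\|_{L^p(Q,\C)\to L^{p_0}(Q,\C^m)}\cdot\|F_N\|_{L^p(Q)}$ with the $L^p$-equidistribution on the right and, on the left, a second application of equidistribution to the continuous $Q^N$-function $\|\sum_k T^k_{\widetilde{M}}\Phi_k\|_{\C^m}^{p_0}$, then letting $\eta\to 0$ and the $M_k$'s grow as needed, yields the claim. The main obstacle is the bookkeeping between the two approximations: $\eta$ controls the Fourier-side error and must be chosen before fixing the lacunarity, but the lacunarity needed for the $L^p$-equidistribution depends in turn on the Fourier degrees appearing in $F_N$, which grow like $M_N D$. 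A diagonal argument -- fix $\eta_j\downarrow 0$, then choose $M_2,\dots,M_N$ large enough for the $\eta_j$-approximation \emph{and} for the desired equidistribution precision $1/j$ -- handles this cleanly.
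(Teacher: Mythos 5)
Your proof is correct, but it takes a genuinely different route from the paper's. Both arguments rest on the same core observation: under the lacunary scaling $\theta_i \mapsto N^i \eta$ (or $M_i\theta$), a Fourier mode of $\Phi_k$ acquires the one-variable frequency $\sum_i N^i l_i$ (resp.\ $\sum_i M_i j_i$), and homogeneity of order zero plus continuity of $M$ at $j_k\neq 0$ give $M(\sum_i N^i l_i)\to M(l_k)$. Where you differ is in how you turn the one-variable operator bound into an inequality on $Q^N$. The paper works with $f_\eta^k(\theta_1,\dots,\theta_k) := f_k(\theta_1 + N\eta,\dots,\theta_k + N^k\eta)$ and \emph{averages} over $\eta\in Q$: Fubini and translation invariance give an \emph{exact} identity $\int_Q\|\sum_k f_\eta^k\|^p_{L^p(Q^J)}\,d\eta = \|\sum_k f_k\|^p_{L^p(Q^J)}$, so no equidistribution or Weyl-type argument is needed; the price is that after applying $\|T_{\widetilde M}\|$ to the $\eta$-integral for each fixed $\vec\theta$, one must move the $L^{p_0}(\vec\theta)$-norm past the inner $L^p(\eta)$-norm, which uses the concavity of $x\mapsto x^{p_0/p}$ (Jensen), i.e.\ the hypothesis $p_0\leq p$. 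You instead \emph{restrict} to the lacunary diagonal $\theta\mapsto(M_1\theta,\dots,M_N\theta)$, producing a genuine one-variable function $F_N$, and then the operator bound $\|T_{\widetilde M}F_N\|_{L^{p_0}(Q)}\leq \|T_{\widetilde M}\|\,\|F_N\|_{L^p(Q)}$ applies directly, with no interchange of norms and hence no Jensen step; the price is that you must replace the exact Fubini identity by an asymptotic equidistribution statement (approximating $|G|^p$ and $\|\sum_k T^k_{\widetilde M}\Phi_k\|^{p_0}_{\C^m}$ uniformly by trigonometric polynomials, then using orthogonality of characters along a sufficiently lacunary sequence). Your bookkeeping concern is real but mild: the lacunarity needed for equidistribution depends on the degree of the approximating polynomial on $Q^N$ (a fixed object, once the target precision $\e$ is chosen), not on the degree of $F_N$ itself, so after fixing $\e$ and $\eta$ one simply chooses $M_1\ll\cdots\ll M_N$ to satisfy both constraints at once; the diagonal argument you sketch certainly handles it. One structural advantage of your route worth noting: since it bypasses Jensen, the hypothesis $p_0\leq p$ is not used in your argument, whereas it is essential in the paper's.
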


\begin{proof}
Let $\eta \in Q$, $f_k$ a polynomial in $E_k$ and $N \in \Z_+$.  For notation we define $f_{\eta}^k(\theta_1,\dots,\theta_k) := f_{k}(\theta_1 + N \eta, \dots ,\theta_k + N^k \eta)$. Then for all $J \in \Z_+$, 

$\int_{Q}{\left\|\sum_{k=1}^J{f_{\eta}^k}\right\|_{L^p(Q^J)}^p \frac{d\eta}{(2\pi)^d}}$
\begin{align}
\label{Fubini1} 
&= \int_{Q}{\int_{Q}{\dots\int_{Q}{\left\|\sum_{k=1}^J{f_{k}(\theta_1 + N \eta, \dots ,\theta_k + N^k \eta)}\right\|_{\C^m}^p \frac{d\s_1}{(2\pi)^d} \dots \frac{d\s_J}{(2\pi)^d} \frac{d \eta}{(2\pi)^d}}}}\\ \nonumber
&=\int_{Q}{\int_{Q}{\dots\int_{Q}{\left\|\sum_{k=1}^J{f_{k}(u_1, ,\dots u_k)}\right\|_{\C^m}^p \frac{du_1}{(2\pi)^d} \dots \frac{du_J}{(2\pi)^d} \frac{d \eta}{(2\pi)^d}}}}\\ \nonumber
&=\left\|\sum_{k=1}^J{f_{k}(u_{1}, \dots, u_{k})}\right\|_{L^p(Q^J)}^p
\end{align}
Similarly, 
\begin{equation}
\label{Fubini2}
\int_{Q}{\left\|\sum_{k=1}^J{T_{\widetilde{M}}^k f_{\eta}^k(\s_1, \dots, \s_k)}\right\|_{L^{p_0}(Q^J)}^{p_0}  \frac{d\eta}{(2\pi)^d}} = \left\|\sum_{k=1}^J{T_{\widetilde{M}}^k f_{k}(u_{1}, \dots, u_{k})}\right\|_{L^{p_0}(Q^J)}^{p_0}
\end{equation}

For fixed $\vec{\s} = (\s_1,\dots,\s_k)$ we will use the notation $F_{\s}^k(\eta):=f_{\eta}^k(\s_1,\dots, \s_k)$ to emphasize that we are thinking of the quantity as a function of $\eta$ and $\vec{\theta}$ as a parameter.  This is because $T_{\widetilde{M}}$ can only act on a function of one variable $\eta \in Q.$  Also, from this point on in the proof, all measures will be normalized Lebesgue measures.  We claim that 
\begin{equation}
\label{unifconv}
\lim_{N \to \infty}\left|\int_{Q}{\left\|\sum_{k=1}^J{T_{\widetilde{M}}^kf_{\eta}^k(\s_1,\dots, \s_k)}\right\|_{L^{p_0}(Q^J,\vec{\s})}^{p_0}\!\!\!\!\!\!\!\!\!\!\!\!d\eta} - \int_{Q^J}{\left\|\sum_{k=1}^J{T_{\widetilde{M}}F_{\s}^k(\eta)}\right\|_{L^{p_0}(Q,\eta)}^{p_0}\!\!\!\!\!\!\!\!\!\!\!\!d\vec{\s}}\right| = 0
\end{equation}
with convergence being uniform.  Let us prove this claim.  

Since $f_{\eta}^k(\s_1,\dots, \s_k) = \sum_{l_1 \in \Z^n}{\dots \sum_{l_k \in \Z^n}{x_{l_1,\dots,l_k}e^{i(l_1,\s_1 + N\eta)}\dots e^{i(l_k,\s_k + N^k\eta)}}}$ then 
\beno
T_{\widetilde{M}}^kf_{\eta}^k(\s_1,\dots, \s_k) = \sum_{l_1 \in \Z^n}{\dots \sum_{l_k \in \Z^n}{M(l_k)x_{l_1,\dots,l_k}e^{i(l_1,\s_1 + N\eta)}\dots e^{i(l_k,\s_k + N^k\eta)}}}
\eeno
and $T_{\widetilde{M}}F_{\s}^k(\eta) = \sum_{l_1 \in \Z^n}{\dots \sum_{l_k \in \Z^n}{M(Nl_1 +\dots+N^k l_k)x_{l_1,\dots,l_k}e^{i(l_1,\s_1 + N\eta)}\dots e^{i(l_1,\s_1 + N^k\eta)}}}$
But, $M(Nl_1 +\dots+N^kl_k) = M(l_k + \frac{1}{N}l_{k-1}+\dots+\frac{1}{N^{k-1}}l_1) \to M(l_k)$ as $N \to \infty$. Note that $l_k \neq 0$ by how we defined $E_k,$ so the fact that $M$ may not be continuous at the origin is fine.  Then, 
\beno
\left(\int_{Q}{\left\|\sum_{k=1}^J{T_{\widetilde{M}}^kf_{\eta}^k(\s_1,\dots, \s_k)}\right\|_{L^{p_0}(Q^J,\vec{\s})}^{p_0}d\eta}\right)^{\frac{1}{p_0}} - \left(\int_{Q^J}{\left\|\sum_{k=1}^J{T_{\widetilde{M}}F_{\s}^k(\eta)}\right\|_{L^{p_0}(Q,\eta)}^{p_0}d\vec{\s}}\right)^{\frac{1}{p_0}}
\eeno
\begin{align*}
& = \left(\int_{Q}{\left\|\sum_{k=1}^J{T_{\widetilde{M}}^kf_{\eta}^k(\s_1,\dots, \s_k)}\right\|_{L^{p_0}(Q^J,\vec{\s})}^{p_0}d\eta}\right)^{\frac{1}{p_0}} - \left(\int_{Q}{\left\|\sum_{k=1}^J{T_{\widetilde{M}}F_{\s}^k(\eta)}\right\|_{L^{p_0}(Q^J,\vec{\s})}^{p_0}d\eta}\right)^{\frac{1}{p_0}}\\
& \leq \left(\int_Q{\left\|\sum_{k=1}^J{T_{\widetilde{M}}^kf_{\eta}^k(\s_1,\dots,\s_k)-T_{\widetilde{M}}F_{\s}^k(\eta)}\right\|_{L^{p_0}(Q^J,\vec{\s})}^{p_0}d\eta}\right)^{\frac{1}{p_0}}\\
& \leq \left(\sum_{k=1}^J{\sum_{l_1 \in \Z^n}{\dots \sum_{l_k \in \Z^n}{\|x_{l_1,\dots,l_k}\|_{\C^m}^{p_0}\|M(Nl_1+\dots+N^kl_k)-M(l_k)\|^{p_0}}}}\right)^{\frac{1}{p_0}} \xrightarrow[]{N \to \infty} 0
\end{align*}
This completes the proof of the claim.  

Now we just need some easy estimates to finish the proof of the Theorem.  For notation, let $\vec{u} = (u_1,\dots,u_J) \in Q^J$ and $du_i$ the normalized Lebesgue measure on $\T^d$.  Then for sufficiently large $N$ and a given $\e > 0$, we can use (\ref{Fubini1}), (\ref{Fubini2}) and (\ref{unifconv}) to get

$\left\|\sum_{k=1}^J{T_{\widetilde{M}}^k f_k(u_1,\dots,u_k)}\right\|_{L^{p_0}(Q^J,\vec{u})}^{p_0}$

\begin{align*}
&=\int_Q{\left\|\sum_{k=1}^J{T_{\widetilde{M}}^k f_{\eta}^k(\s_1,\dots,\s_k)}\right\|_{L^{p_0}(Q^J,\vec{u})}^{p_0}d\eta}\\ \nonumber
& \leq \int_{Q^J}{\left\|\sum_{k=1}^J {T_{\widetilde{M}} F_{\s}^k(\eta)}\right\|_{L^{p_0}(Q,\eta)}^{p_0} d\vec{\s}} + \e\\\nonumber
& \leq \|T_{\widetilde{M}}\|_{L^p(Q) \to L^{p_0}(Q)}^{p_0} \int_{Q^J}{\left\|\sum_{k=1}^J { F_{\s}^k(\eta)}\right\|_{L^{p}(Q,\eta)}^{p_0} d\vec{\s}} + \e\\ \nonumber
& = \|T_{\widetilde{M}}\|_{L^p(Q) \to L^{p_0}(Q)}^{p_0} \int_{Q^J}{\left(\int_{Q}{\left\|\sum_{k=1}^J { F_{\s}^k(\eta)}\right\|_{\C^m}^p d\eta}\right)^{\frac{p_0}{p}} d\vec{\s}} + \e
\end{align*}
\be \label{Jensen}
& \leq & \|T_{\widetilde{M}}\|_{L^p(Q) \to L^{p_0}(Q)}^{p_0} \left(\int_{Q^J}{\int_{Q}{\left\|\sum_{k=1}^J{F_{\theta}^k(\eta)}\right\|_{\C^m}^p d\eta d\vec{\theta}}}\right)^{\frac{p_0}{p}} + \e\\ \nonumber
& = & \|T_{\widetilde{M}}\|_{L^p(Q) \to L^{p_0}(Q)}^{p_0} \left(\int_{Q^J}{\left\|\sum_{k=1}^J{f_{\eta}^k(\theta_1, \dots, \theta_k)}\right\|_{L^p(Q,\eta)}^p d\vec{\theta}}\right)^{\frac{p_0}{p}} + \e\\ \nonumber
& = & \|T_{\widetilde{M}}\|_{L^p(Q) \to L^{p_0}(Q)}^{p_0}\left\|\sum_{k=1}^J{f_k(u_1,\dots, u_k)}\right\|_{L^p(Q^J, \vec{u})}^{p_0}+\e \nonumber
\ee

Note that we used the concave version of Jensen's inequality in (\ref{Jensen}).\qedhere
\end{proof}

\begin{theorem}
\label{operatorlowerbound}
Fix $\tau \in \R$ and let $M = \left(\begin{array}{c}
m\\
\tau \end{array} \right)$ such that $m:\R^d \to \R$ be even, continuous (except possibly at the origin) and homogeneous of order zero multiplier.  If there exists $v^{\pm} \in S^{d-1}$ (the unit sphere) such that $m(v^{\pm}) = \delta^{\pm},$ $\delta^+ = -\delta^- > 0$ then $\|T_{M}\|_{L^p(\R^d,\C) \to L^{p_0}(\R^d,\C^2)} \geq \frac{2 C_{p, p_0}^{\tau}}{\delta^+ - \delta^-}.$
\end{theorem}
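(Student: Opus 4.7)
My plan is to adapt the Bourgain/Geiss--Montgomery-Smith--Saksman scheme: combine Lemma \ref{transference1} (applicable since $p_0\le p$) with Theorem \ref{bourgain} to reduce the claim to producing test functions $\Phi_k\in E_k$ on $Q^N$, $Q=\T^d$, whose Bourgain ratio $\|\sum_k T_{\widetilde M}^k\Phi_k\|_{L^{p_0}}/\|\sum_k\Phi_k\|_{L^p}$ approaches $2C_{p,p_0}^\tau/(\delta^+-\delta^-)$ from below.

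The key observation underlying the construction is that $T_{\widetilde m}$ admits exact $\pm 1$-valued Rademacher eigenfunctions. For any $\ell\in\Z^d\setminus\{0\}$ set
$$R_\ell(\theta):=\sign\bigl(\cos(\ell\cdot\theta)\bigr),\qquad\theta\in\T^d.$$
Then $R_\ell$ is mean-zero and uniformly distributed over $\{\pm 1\}$, and its Fourier expansion lives on $\{\pm(2j+1)\ell:j\ge 0\}$. Evenness and homogeneity of order zero of $m$ force the multiplier to take the single value $m(\hat\ell)$, $\hat\ell:=\ell/|\ell|$, along this entire spectrum, so the exact identity $T_{\widetilde m}R_\ell=m(\hat\ell)R_\ell$ holds and hence $T_{\widetilde M}R_\ell=(m(\hat\ell),\tau)^\top R_\ell$. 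Because $m$ is continuous on $S^{d-1}$, I would choose integer vectors $\ell^\pm$ with $\hat\ell^\pm$ so close to $v^\pm$ that $\delta^\pm_\ell:=m(\hat\ell^\pm)$ approximate $\delta^\pm$ to arbitrary accuracy.

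Given a sign pattern $\vec\beta\in\{\pm 1\}^N$ and scalar Walsh coefficients $d_k:\{\pm 1\}^{k-1}\to\C$ nearly attaining the supremum in Definition \ref{UMDpp0}, write $R^\pm:=R_{\ell^\pm}$ and set
$$\Phi_k^{\vec\beta}(\theta_1,\dots,\theta_k):=d_k\bigl(R^{\beta_1}(\theta_1),\dots,R^{\beta_{k-1}}(\theta_{k-1})\bigr)\,R^{\beta_k}(\theta_k)\in E_k.$$
For every $\vec\beta$ the sequence $(R^{\beta_k}(\theta_k))_{k=1}^N$ is an i.i.d.\ Rademacher family on $Q^N$, so the distributional matching yields $\|\sum_k\Phi_k^{\vec\beta}\|_{L^p(Q^N)}=\|F_N\|_{L^p}$ for the Walsh martingale $F_N=\sum_k d_k\,r_k$. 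The exact eigen-relation then gives
$$\sum_k T_{\widetilde M}^k\Phi_k^{\vec\beta}=\begin{pmatrix}\sum_k \delta^{\beta_k}_\ell d_k R^{\beta_k}\\ \tau\sum_k d_k R^{\beta_k}\end{pmatrix}\xrightarrow[\hat\ell^\pm\to v^\pm]{}\begin{pmatrix}\delta^+\,MT_{\vec\beta}F_N\\ \tau F_N\end{pmatrix}$$
in $L^{p_0}$ distribution, using $\delta^{\beta_k}=\beta_k\delta^+$. Inserting into Theorem \ref{bourgain} and optimizing over $\vec\beta$ and $\{d_k\}$, the comparison with Definition \ref{UMDpp0} together with the hypothesis $\delta^+=-\delta^->0$ will identify the limiting ratio with $2C_{p,p_0}^\tau/(\delta^+-\delta^-)$, yielding the claim.

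The main technical hurdle is making the convergence $\delta^\pm_\ell\to\delta^\pm$ uniform enough across the finitely many indices $k=1,\dots,N$ and the fixed coefficients $\{d_k\}$ that the Bourgain ratio stays within $\varepsilon$ of the target; by homogeneity of order zero $|\delta^\pm_\ell-\delta^\pm|=|m(\hat\ell^\pm)-m(v^\pm)|$, so continuity of $m$ on $S^{d-1}$ and a diagonal choice of $\ell^\pm$ depending on $N$, $\{d_k\}$ and $\varepsilon$ will close the argument. The mean-zero property of $R^\pm$ places each $\Phi_k^{\vec\beta}$ in $E_k$, so the possible discontinuity of $m$ at the origin never enters.
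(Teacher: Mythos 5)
Your proposal is correct and follows essentially the same scheme as the paper: reduce via Lemma \ref{transference1} to the torus, invoke Theorem \ref{bourgain}, build $\pm 1$-valued sign-based ``Rademacher surrogates'' on $\T^d$ whose Fourier spectrum lies on odd multiples of a fixed integer vector so that evenness and zero-homogeneity of $m$ make them exact eigenfunctions of $T_{\widetilde m}$, match distributions with the Walsh martingale that nearly attains $C^\tau_{p,p_0}$, and finally rescale by $2/(\delta^+-\delta^-)$. The only cosmetic departure is your choice of the even square wave $\sign(\cos(\ell\cdot\theta))$ in place of the paper's odd square wave $\sign((n,\theta))$; both have spectrum on $\{\pm(2j+1)\ell\}$ and both give the same eigenrelation, so nothing changes.
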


\begin{proof}
For this proof we will continue using the notation $Q$ to represent $\T^d.$  Let $\e > 0$ be given.  By the definition of $C_{p,p_0}^{\tau}$, there exists $\vec{\beta} = (\beta_1, \dots,\beta_N)$ and complex-valued $d_1(r_0) ,d_2(r_0,r_1),\dots,d_N(r_0,\dots,r_{N-1}),$ where $\beta_k \in \{\pm1\}$ such that 
\begin{equation}
\label{vUMD}
C_{p,p_0}^{\tau}-\frac{\left\|\sum_{k=1}^N{\left(\begin{array}{c}
\beta_k\\
\tau \end{array}\right)r_k d_k(r_0,\dots,r_{k-1})}\right\|_{p_0}}{\|\sum_{k=1}^N{r_k d_k(r_0,\dots,r_{k-1})}\|_p}
 \leq \e
\end{equation}
Let $\delta > 0$ be given and $m(v^{\pm}) = \delta^{\pm},$ then there exists $\{n_k^{\pm}\} \subset \Z^d$ such that $\|m(n_k^{\pm}) - m(v^{\pm})\| < \delta$ for $k$ sufficiently large.  To ease notation, let $n^{\pm}$ be one such $n_k^{\pm}$.  Since sign is an odd $L^p$ function on $[-\pi, \pi)$ then $\sign(\theta) = \sum_{j \in \Z \setminus \{0\}}{c_j \sin(j\theta)}$ for some $c_j$'s, where $\theta \in [-\pi, \pi)$.
Let $\psi^{\pm}(\s) = \sign(n^{\pm},\s),\,\,\s \in Q$ and $\{\alpha_k\}_{k=0}^N$ an arbitrary sequence from $\{\delta^{\pm}\}$.  Then if we define 
\begin{displaymath}
   \psi_k := \left\{
     \begin{array}{lr}
       \psi^+ &  \alpha_k = \delta^+\\
       \psi^- & \alpha_k = \delta^-
     \end{array}
   \right.
\end{displaymath}
an easy computation shows that 
\begin{displaymath}
   T_{\widetilde{M}}^k[\psi_k(\s_k) d_k(\psi_0(\s_0),\dots,\psi_{k-1}(\s_{k-1}))] = \left\{
     \begin{array}{lr}
       M(n^+)\psi_k(\s_k) d_k(\psi_0(\s_0),\dots,\psi_{k-1}(\s_{k-1}))&,\, \alpha_k \in \delta^+\\
       M(n^-)\psi_k(\s_k) d_k(\psi_0(\s_0),\dots,\psi_{k-1}(\s_{k-1}))&,\, \alpha_k \in \delta^-
     \end{array}
   \right.
\end{displaymath}
 
Since $(r_0,\dots,r_{N})$ has the same distribution on $[0,1)$ as $(\psi_0(\s_1),\dots,\psi_{N}(\psi_{N}))$  has on $Q^N$ then

$\left\|\sum_{k=1}^N{\left(\begin{array}{c}
\al_k\\
\tau \end{array}\right)r_k d_k(r_0,\dots,r_{k-1})}\right\|_{L^{p_0}[0,1)}$
\begin{align*}
&= \left\|\sum_{k=1}^N{\left(\begin{array}{c}
\al_k\\
\tau \end{array}\right)\psi_k(\theta_k) d_k(\psi_0(\theta_0),\dots,\psi_{k-1}(\theta_{k-1}))}\right\|_{L^{p_0}(Q^N)}\\
& \leq \left\|\sum_{k=1}^N{M((n^+ \vee n^-)_k) \psi_k(\s_k) d_k(\psi_0(\s_0),\dots,\psi_{k-1}(\s_{k-1}))}\right\|_{L^{p_0}(Q^N)}\\
& \,\,\,\,\,\,\,\,\,\,\,\,\,\,\,\,\,\,\,\,\,\,\,\,+ \left\|\sum_{k=1}^N{\left[\left(\begin{array}{c}
\al_k\\
\tau \end{array}\right) - M((n^+ \vee n^-)_k)\right] \psi_k(\s_k) d_k(\psi_0(\s_0),\dots,\psi_{k-1}(\s_{k-1}))}\right\|_{L^{p_0}(Q^N)}\\
&=: I + II
\end{align*}
The notation \begin{displaymath}
   M((n^+ \vee n^-)_k) = \left\{
     \begin{array}{lr}
       M(n^+) &, \,\,\text{if}\,\,  \alpha_k = \delta^+\\
       M(n^-) &, \,\,\text{if}\,\,  \alpha_k = \delta^-
     \end{array}
   \right.
\end{displaymath}
and we similarly define $M((v^+ \vee v^-)_k).$  Now, 
\beno
II &=& \left\|\sum_{k=1}^N{[m((v^+ \vee v^-)_k) - m((n^+ \vee n^-)_k)]\psi_k(\theta_k)d_k(\psi_0(\theta_0),\dots, \psi_{k-1}(\theta_{k-1}))}\right\|_{L^{p_0}(Q^N)}\\
& \leq & \delta \sum_{k=1}^N{\|\psi_k(\theta_k)d_k(\psi_0(\theta_0),\dots, \psi_{k-1}(\theta_{k-1}))}\|_{L^{p_0}(Q^N)} \xrightarrow[]{\delta \to 0} 0
\eeno
Using Theorem \ref{bourgain} gives
\begin{align*}
I &= \left\|\sum_{k=1}^N{T_{\widetilde{M}}^k [\psi_k(\s_k) d_k(\psi_0(\s_0),\dots,\psi_{k-1}(\s_{k-1}))]}\right\|_{L^{p_0}(Q^N)}\\
& \leq \|T_{\widetilde{M}}\|_{L^p(Q,\C) \to L^{p_0}(Q,\C^2)}\left\|\sum_{k=1}^N{\psi_k(\s_k) d_k(\psi_0(\s_0),\dots,\psi_{k-1}(\s_{k-1}))}\right\|_{L^{p}(Q^N)}\\
&= \|T_{\widetilde{M}}\|_{L^p(Q,\C) \to L^{p_0}(Q,\C^2)} \left\|\sum_{k=1}^N{r_k d_k(r_0,\dots,r_{k-1})}\right\|_{L^{p}[0,1)}
\end{align*}

So, all together we have 

$\left\|\sum_{k=1}^N{\left(\begin{array}{c}
\alpha_k\\ \nonumber
\tau \end{array}\right) r_k d_k(r_0,\dots,r_{k-1})}\right\|_{L^{p_0}[0,1)}$
\begin{equation}
\label{mainest}
\leq \|T_{\widetilde{M}}\|_{L^p(Q,\C) \to L^{p_0}(Q,\C^2)} \left\|\sum_{k=1}^N{r_k d_k(r_0,\dots,r_{k-1})}\right\|_{L^{p}[0,1)}
\end{equation}

Now that we have our main estimate we just need to rescale $\delta^{\pm}$ to $\pm 1$ to get $C_{p, p_0}^{\tau}$ into the estimate.  Let $A = \frac{2}{\delta^+ - \delta^-},$ then we can choose the correct $\alpha_k$ so that 
\begin{displaymath}
   A \alpha_k =: \beta_k = \left\{
     \begin{array}{lr}
       +1 ,& \alpha_k = \delta^+\\
       -1 ,& \alpha_k = \delta^-,
     \end{array}
   \right.
\end{displaymath}
where $\beta_k$ satisfies the estimate in (\ref{vUMD}).  We can use (\ref{vUMD}) and (\ref{mainest}) to get
\beno 
C_{p,p_0}^{\tau} &\leq& \frac{\left\|\sum_{k=1}^N{\left(\begin{array}{c}
\beta_k\\ \nonumber
\tau \end{array}\right)r_k d_k(r_0,\dots,r_{k-1})}\right\|_{p_0}}{\|\sum_{k=1}^N{r_k d_k(r_0,\dots,r_{k-1})}\|_p} +  \e\\ \nonumber
& \leq &  |A|\,\, \|T_{\widetilde{M}}\|_{L^p(Q,\C) \to L^{p_0}(Q,\C^2)} + \e\\ \nonumber
& \leq & |A| \|T_{M}\|_{L^p(\R^d,\C) \to L^{p_0}(\R^d,\C^2)} + \e 
\eeno
where the last inequality is due to Theorem \ref{transference1}. \qedhere
\end{proof}

\begin{cor} \label{mainresultlowerbound}
Fix $\tau \in \R$ and let $M = \left(\begin{array}{c}
m\\
\tau \end{array} \right),$ where m is the multiplier corresponding to the real or imaginary part of the Ahlfors-Beurling transform.  Then $m$ takes a maximum and minimum value of $\pm 1$ on the unit circle and therefore $\|T_{M}\|_{L^p(\C,\C) \to L^{p}(\C,\C^2)} \geq C_{p,p}^\tau.$  
\end{cor}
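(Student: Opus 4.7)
The plan is to apply Theorem \ref{operatorlowerbound} directly with $p_0 = p$ and $d = 2$ (identifying $\C \cong \R^2$), once we verify the hypotheses for $m$ being either the real or imaginary part of the Ahlfors--Beurling symbol. Recall that the Ahlfors--Beurling operator $B$ is the Fourier multiplier on $\R^2 \cong \C$ with symbol $\mu(\xi) = \bar{\xi}^2/|\xi|^2$ (up to a sign convention). Writing $\xi = (\xi_1,\xi_2) \in \R^2$, one has
\[
\operatorname{Re}\mu(\xi) = \frac{\xi_1^2 - \xi_2^2}{\xi_1^2 + \xi_2^2}, \qquad \operatorname{Im}\mu(\xi) = -\frac{2\xi_1 \xi_2}{\xi_1^2 + \xi_2^2}.
\]

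My first step would be to check that either choice of $m = \operatorname{Re}\mu$ or $m = \operatorname{Im}\mu$ meets all the hypotheses of Theorem \ref{operatorlowerbound}: each is real-valued, even in $\xi$ (since $\mu$ itself is even), continuous on $\R^2\setminus\{0\}$, and homogeneous of degree zero. Each extends trivially from $S^1$ by homogeneity.

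Next, I would exhibit the points $v^{\pm} \in S^1$ with $m(v^\pm) = \pm 1$. Parametrizing $S^1$ by $\xi = (\cos\theta,\sin\theta)$, we have $\operatorname{Re}\mu(\xi) = \cos(2\theta)$ and $\operatorname{Im}\mu(\xi) = -\sin(2\theta)$, each of which is a surjection onto $[-1,1]$. In either case we may therefore take $\delta^+ = 1$ and $\delta^- = -1$, so that the factor appearing in Theorem \ref{operatorlowerbound} is
\[
\frac{2 C_{p,p}^\tau}{\delta^+ - \delta^-} = \frac{2 C_{p,p}^\tau}{2} = C_{p,p}^\tau.
\]

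Applying Theorem \ref{operatorlowerbound} with these parameters then yields
\[
\|T_M\|_{L^p(\C,\C) \to L^p(\C,\C^2)} \geq C_{p,p}^\tau,
\]
which is the desired inequality. There is no serious obstacle here — all the real work has been done in Theorem \ref{operatorlowerbound} (and the transference Lemma \ref{transference1}); the present corollary is essentially a verification that the Ahlfors--Beurling symbol (real or imaginary part) is a real, even, continuous, zero-homogeneous multiplier whose range on $S^1$ is exactly the symmetric interval $[-1,1]$, so that the scaling factor in Theorem \ref{operatorlowerbound} becomes trivial.
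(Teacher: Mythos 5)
Your proof is correct and follows exactly the route the paper intends: verify that $\operatorname{Re}\mu$ (or $\operatorname{Im}\mu$) is real, even, continuous away from the origin, homogeneous of degree zero, and attains the values $\pm 1$ on $S^1$, then apply Theorem \ref{operatorlowerbound} with $p_0 = p$ and $\delta^{\pm} = \pm 1$, which makes the scaling factor $2/(\delta^+ - \delta^-)$ equal to $1$. The paper gives no separate proof of the corollary precisely because, as you note, all the substance lies in Theorem \ref{operatorlowerbound} and the transference lemma.
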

Note that $C_{p,p}^\tau$ was computed as $((p^*-1)^2+\tau^2)^{\frac 12},$ for $|\tau| \leq \frac 12$ and $1<p<2$ or $\tau \in \R$ and $2 \leq p < \infty$ in \cite{BJV1}, \cite{BJV2} and \cite{BJV3}.  For a definition and properties of the Ahlfors-Beurling operator look ahead to Section \ref{matrixapplications}.  

\begin{remark}
We expect the converse inequality to be the same.  So this lower bound technique will be sharp just as when $\tau = 0.$  This tells us that if we are able to determine the operator norm of some perturbation of the martingale transform, then we will be able to determine the lower bound of the same perturbation of $\Re B$ and $\Im B,$ using the same proof as here.
\end{remark}

\section{\bf Matrix--Valued Multipliers}

\subsection{\bf Multipliers}
\label{}

Generalizing the idea of a scalar muliplier, let us consider $M = \{m_{ij}\}_{1 \leq i,j \leq m}$ as a matrix-valued function whose entries are bounded functions from $\R^n$ to $\C$ and $p,p_0$ such that $1 \leq p,p_0 \leq \infty$.  We say that $M$ is the multiplier corresponding to the operator $T_M$, if $T_Mf:= (M \widehat{f})^{\vee}$ is a bounded operator from $L^p(\R^n,\C^m)$ to $L^{p_0}(\R^n,\C^m)$, considering functions $f$ from $\R^n$ to $\C^m$ with components in $\mathcal{S}$.  Note that the Fourier transform of $f$ is defined componentwise and $\|f\|_{L^p(\R^n,\C^m)}^p := \int_{\R^n}{\|f(x)\|_{\C^m}^p \, dx}$.  We use the notation $\widetilde{M} = (\widetilde{m}_{ij})_{1 \leq i,j \leq m}$ to denote the discrete multiplier.

\begin{remark}
We will use the notation $\mathscr{M}_{m \times n}^{p,p_0}(\R^d)$ to denote the class of all $m \times n$ multipliers whose components are continuous (except possibly at zero) and homogeneous of order zero , where $\|M\|_{\mathscr{M}_{m \times n}^{p,p_0}} := \|T_M\|_{L^p(\R^d, \C^n) \to L^p(\R^d, \C^m)}$ for any such $M \in \mathscr{M}_{m \times n}^{p,p_0}(\R^d).$
\end{remark}

\subsection{\bf Generalizing Unconditional Martingale Differences}
\label{}

The idea of an Unconditional Martingale Difference ($UMD$) will be generalized here.  Let $X$ be a Banach space and $\{r_n\}_{n \geq 0}$ the Rademacher sequence.  Let $\{F_N\}_{N \geq 0}$ and $\{G_N\}_{N \geq 0}$ be $X$-valued martingale difference sequences of the form 
$$F_N = \sum_{k=1}^N{d_k(r_0,\dots,r_{k-1})r_k},\,\,\,\, G_N = \sum_{k=1}^N{\beta_k d_k(r_0,\dots,r_{k-1})r_k}$$
where $d_{k}:\{\pm 1\}^k \to X$ is Bochner integrable and $\vec{\beta}$ is a vector with entries $\beta_{i} \in {\pm 1}$.  For any particular such $\vec{\beta}$, $MT_{\vec{\beta}}(F_N) := G_N$ is a martingale transform.

\begin{defi}
\label{UMDpp0}
$UMD_{p,p_0}(X) := \sup_{\vec{\beta}} \|MT_{\vec{\beta}}\|_{L_{X}^p[0,1) \to L_{X}^{p_0}[0,1)}$
\end{defi}

Note that $UMD_p(X) = UMD_{p,p}(X)$. For definitions and properties regarding $UMD_p(X)$ for a Banach space $X$,  refer to \cite{Burkholder}.  There is one property that is very useful in getting sharp estimates for singular integrals.  The property is that $UMD_p(\mathscr{H}) = p^*-1$, where $\mathscr{H}$ is a Hilbert space.
\subsection{\bf Transference}
\label{}
 
The same proofs from Section \ref{transference} apply here to the following.  
 
\begin{lemma}
Suppose $1 < p_0 \leq p < \infty$ and $M \in \mathscr{M}_{m \times n}^{p,p_0}(\R^d),$ then $\|T_{M}\|_{L^{p}(\R^d,\C^n) \to L^{p_0}(\R^d,\C^m)} \geq \|T_{\widetilde{M}}\|_{L^{p}(\T^d,\C^n) \to L^{p_0}(\T^d,\C^m)}$.
\end{lemma}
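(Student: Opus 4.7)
The plan is to repeat the proof of Lemma \ref{transference1} verbatim, substituting $\C$-valued test functions with $\C^n$-valued ones on the domain side and $\C^m$-valued ones on the target side. The one point that needs verification is that the Gaussian bump argument still produces the right identity when the multiplier is matrix-valued. Concretely, for trigonometric polynomials $P : \T^d \to \C^n$ and $Q : \T^d \to \C^m$, the key claim is
\beno
\int_{\T^d} (T_{\widetilde M} P(x), Q(x))_{\C^m}\, dx \;=\; \lim_{\e \to 0} \e^{d/2} \int_{\R^d} (T_M(PL_{\e/p_0})(x), Q(x) L_{\e/q_0}(x))_{\C^m}\, dx,
\eeno
where $L_\e(x) = e^{-\pi \e |x|^2}$ and $\tfrac{1}{p_0} + \tfrac{1}{q_0} = 1$. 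I would verify this by linearity on the monomials $P(x) = e^{2\pi i (j,x)} a$ with $a \in \C^n$ and $Q(x) = e^{2\pi i (k,x)} b$ with $b \in \C^m$. The left side collapses via Parseval to $(M(j) a, b)_{\C^m}\, \delta_{jk}$, and the right side, after a change of variables of the form $\xi = j + \sqrt{\e/p_0}\, \eta$ and the continuity and homogeneity of $M$, converges to the same thing; the matrix structure only enters as the pointwise action $M(\xi) a \in \C^m$, which poses no analytical difficulty.

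Once the identity is established, I would apply the pointwise Cauchy--Schwarz in $\C^m$ followed by scalar H\"older with exponents $p_0$ and $q_0$ to obtain
\beno
\left| \int_{\T^d} (T_{\widetilde M} P, Q)_{\C^m}\, dx \right| \;\leq\; \limsup_{\e \to 0} \e^{d/2} \|T_M(PL_{\e/p_0})\|_{L^{p_0}(\R^d, \C^m)} \|Q L_{\e/q_0}\|_{L^{q_0}(\R^d, \C^m)},
\eeno
and then use the definition of $\|T_M\|_{L^p(\R^d,\C^n) \to L^{p_0}(\R^d, \C^m)}$ to pull out the operator norm and leave $\e^{d/(2p)}\|P L_{\e/p_0}\|_{L^p(\R^d, \C^n)}$ and $\e^{d/(2q_0)}\|Q L_{\e/q_0}\|_{L^{q_0}(\R^d, \C^m)}$. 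The Poisson summation formula, applied componentwise to $\|P(x)\|_{\C^n}^p$ and $\|Q(x)\|_{\C^m}^{q_0}$ (both of which are scalar-valued, continuous, and decay appropriately), converts these $\R^d$-integrals in the limit into the desired $\T^d$-integrals $\|P\|_{L^p(\T^d, \C^n)}$ and $\|Q\|_{L^{q_0}(\T^d, \C^m)}$.

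Taking the supremum over $\C^m$-valued trigonometric polynomials $Q$ with $\|Q\|_{L^{q_0}(\T^d, \C^m)} \leq 1$ identifies the left side as $\|T_{\widetilde M} P\|_{L^{p_0}(\T^d, \C^m)}$ via the duality $L^{p_0}(\T^d, \C^m)^* = L^{q_0}(\T^d, \C^m)$ (this uses $p_0 > 1$ and the fact that $\C^m$ is reflexive). Supremizing then over $P$ with $\|P\|_{L^p(\T^d, \C^n)} \leq 1$ yields the claimed inequality.

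The principal obstacle is nothing conceptual; it is just bookkeeping to make sure that the scalar Gaussian $L_\e$, which multiplies the vector $P$ componentwise, interacts correctly with the matrix multiplier $M$ in the inverse Fourier transform, and that the Poisson summation step goes through in the vector-valued setting---for which the reduction to scalar-valued $\|P(x)\|^p$ and $\|Q(x)\|^{q_0}$ is precisely the right device, since these are ordinary continuous scalar functions on $\T^d$.
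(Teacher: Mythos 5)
Your proposal follows exactly the route the paper takes: the paper dispenses with this lemma by remarking that "the same proofs from Section \ref{transference} apply here," and your argument is precisely that adaptation, reducing the Gaussian-transference identity and the Poisson-summation step to the scalar functions $\|P(x)\|_{\C^n}^p$ and $\|Q(x)\|_{\C^m}^{q_0}$ and finishing by $L^{p_0}$–$L^{q_0}$ duality. The approach and all key steps match; this is correct and essentially the paper's (implicit) proof.
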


\begin{lemma}
Suppose $1 < p \leq p_0 < \infty$ and $M \in \mathscr{M}_{m \times n}^{p,p_0}(\R^d),$ then $\|T_{M}\|_{L^{p}(\R^d,\C^n) \to L^{p_0}(\R^d,\C^m)} \leq \|T_{\widetilde{M}}\|_{L^{p}(\T^d,\C^n) \to L^{p_0}(\T^d,\C^m)}$.
\end{lemma}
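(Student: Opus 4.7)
The plan is to transplant the scalar-case transference argument from Grafakos \cite{Graf}, pp. 223--225, into the matrix-valued setting, making the same component-by-component substitutions that were carried out in the proof of Lemma \ref{transference1}. In particular, the Fourier transform acts componentwise, the scalar modulus is replaced by $\|\cdot\|_{\C^n}$ and $\|\cdot\|_{\C^m}$ as appropriate, and the Hermitian pairing $(\cdot,\cdot)_{\C^m}$ replaces ordinary multiplication. Because $\C^n$ and $\C^m$ are finite-dimensional Hilbert spaces, every step of the scalar proof --- Parseval, H\"older, Fubini, dominated convergence, Poisson summation --- extends without change.

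The direction of the conclusion, $\|T_M\|\leq\|T_{\widetilde M}\|$ (rather than $\geq$ as in Lemma \ref{transference1}), forces the argument to flow in the reverse order. One starts with Schwartz data $f:\R^d\to\C^n$ and a test function $g:\R^d\to\C^m$, and relates the bilinear form $(T_Mf,g)_{L^2(\R^d,\C^m)}$ to its discrete counterpart by using Gaussian cutoffs $L_\e(x)=e^{-\pi\e|x|^2}$ to approximate $f$ and $g$ by Gaussian-weighted trigonometric polynomials $P^\e L_{\e/p_0}$ and $Q^\e L_{\e/q_0}$, with $1/p_0+1/q_0=1$. Verifying (exactly as in Lemma \ref{transference1}, by testing on monomials $e^{2\pi i(j,\theta)}a$, $a\in\C^n$, and $e^{2\pi i(k,\theta)}b$, $b\in\C^m$) the identity
\begin{equation*}
\int_{\R^d}(T_Mf(x),g(x))_{\C^m}\,dx \;=\; \lim_{\e\to 0}\,\e^{d/2}\int_{\T^d}(T_{\widetilde M}P^\e(\theta),Q^\e(\theta))_{\C^m}\,d\theta,
\end{equation*}
together with the Poisson summation asymptotics that convert $\|P^\e L_{\e/p_0}\|_{L^p(\R^d,\C^n)}$ into $\|P^\e\|_{L^p(\T^d,\C^n)}$ (and analogously on the $L^{q_0}$ side), one bounds the right-hand side by $\|T_{\widetilde M}\|\cdot\|P^\e\|_{L^p(\T^d,\C^n)}\cdot\|Q^\e\|_{L^{q_0}(\T^d,\C^m)}$. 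Taking the supremum over $g$ via the duality $(L^{p_0}(\R^d,\C^m))^*=L^{q_0}(\R^d,\C^m)$ then converts the bilinear estimate into the desired operator-norm inequality.

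The main technical obstacle is that the two normalizations --- $\|f\|_{L^p(\R^d,\C^n)}$ versus $\|P^\e\|_{L^p(\T^d,\C^n)}$, and $\|T_Mf\|_{L^{p_0}(\R^d,\C^m)}$ versus $\|T_{\widetilde M}P^\e\|_{L^{p_0}(\T^d,\C^m)}$ --- must balance correctly under the constraint $p\leq p_0$. This is precisely the regime in which the scalar Grafakos argument works cleanly: the concave-Jensen step that was used in the proof of Lemma \ref{transference1} for $p_0\leq p$ is here replaced by a direct H\"older estimate compatible with $p\leq p_0$. Once the scalar normalization bookkeeping has been audited, every step lifts verbatim to the matrix setting, since all pointwise norms are taken in the finite-dimensional spaces $\C^n,\C^m$ and therefore commute with the integral inequalities used.
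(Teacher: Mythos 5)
Your high-level route --- lift the scalar transference argument from Grafakos componentwise, letting $\C^n$ and $\C^m$ play the roles of $\C$ on the source and target sides, with $\|\cdot\|_{\C^n},\|\cdot\|_{\C^m}$ replacing absolute values and the Hermitian pairing replacing multiplication --- is precisely what the paper does: it simply declares that ``the same proofs from Section \ref{transference} apply,'' and Lemma \ref{transference2} there in turn defers to the scalar proof on pp.\ 223--225 of \cite{Graf}. In that sense you are on the paper's track.

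However, your sketch of the scalar argument you propose to lift is the wrong one. The Gaussian cutoffs $L_\e(x)=e^{-\pi\e|x|^2}$, the identity relating $\int_{\T^d}(T_{\widetilde M}P,Q)$ to a limit of $\e^{d/2}\int_{\R^d}(T_M(PL_{\e/p_0}),QL_{\e/q_0})$, and the Poisson-summation normalization you describe constitute the mechanism of Lemma \ref{transference1} (and Grafakos pp.\ 221--223), which proves the opposite inequality $\|T_M\|_{\R^d}\geq\|T_{\widetilde M}\|_{\T^d}$ in the regime $p_0\leq p$. That argument begins with trigonometric polynomials on $\T^d$ and Gaussian-damps them to produce test functions on $\R^d$; one cannot run it ``in reverse'' by choosing $P^\e$ so that $P^\e L_{\e/p_0}$ approximates an arbitrary Schwartz $f$ --- there is no such choice, since as $\e\to0$ the Gaussian flattens rather than localizes. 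The pp.\ 223--225 proof in Grafakos that the present lemma relies on is a genuinely different argument: one starts with $f\in\mathcal{S}(\R^d)$ having compactly supported $\widehat f$, forms the trigonometric polynomials $g_t(\theta)=t^d\sum_{k\in\Z^d}\widehat f(tk)\,e^{2\pi i(k,\theta)}$ by sampling $\widehat f$ on the fine lattice $t\Z^d$, exploits the homogeneity of degree zero of $M$ (so $M(k)=M(tk)$, hence $T_{\widetilde M}$ acting on $g_t$ mirrors $T_M$ acting on $f$), and then lets $t\to0$ so that periodization/Poisson summation turns the $\T^d$ norms of $g_t$ and $T_{\widetilde M}g_t$ into Riemann approximations of the corresponding $\R^d$ norms of $f$ and $T_Mf$. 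It is in tracking the powers of $t$ picked up in these two Riemann-sum limits that the hypothesis $p\leq p_0$ enters; your remark that ``the concave-Jensen step is replaced by a direct H\"older estimate'' is not how the exponent restriction arises here (that Jensen step belongs to Theorem \ref{bourgain}, not to transference). So: same strategy as the paper in spirit, but the scalar proof you intend to transplant is misidentified, and the details as written would not produce the claimed inequality.
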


\subsection{\bf Lower Bound of Matrix--valued Multipliers}

Let $\mathscr{P}_k$ be a trigonometric polynomial taking $k$ variables in $Q := \T^d$.  The following Theorems have nearly identical proofs to those in Section \ref{lowerbound} so we omit the details.

\begin{theorem}
Let $M \in \mathscr{M}_{m \times n}^{p,p_0}(\R^d),$ corresponding to $T_M$ and 
$$T_{\widetilde{M}}^{k}\left(\sum_{j_1 \in \Z^n} \dots \sum_{j_k \in \Z^n}{x_{j_1, \dots, j_k}e^{i (j_1,\theta_{1})} \dots e^{i (j_k,\theta_{k})}}\right) :=
\sum_{j_1 \in \Z^n} \dots \sum_{j_k \in \Z^n}{M(j_k)x_{j_1, \dots, j_k}e^{i (j_1,\theta_{1})} \dots e^{i (j_k,\theta_{k})}}$$
If $\Phi_{k} \in E_k = \clos_{L^p}\{\mathscr{P}_k: \int_{Q}{\mathscr{P}_k(\theta_{1}, \dots, \theta_{k}) d\theta_{k}}=0\}$ and $p \geq p_0 > 1,$ then 
$$\left\|\sum_{k=1}^{N}{T_{\widetilde{M}}^{k}\Phi_k(\theta_{1}, \dots, \theta_{k})}\right\|_{L^{p_0}(Q^d,\C^m)} \leq \|T_{\widetilde{M}}\|_{L^{p}(Q,\C^n) \to L^{p_0}(Q,\C^m)} \left\|\sum_{k=1}^{N}{\Phi_k(\theta_{1}, \dots, \theta_{k})}\right\|_{L^{p}(Q^d,\C^n)}$$
for all $\Phi_{1} \in E_{1}, \dots, \Phi_{N} \in E_{N}$
\end{theorem}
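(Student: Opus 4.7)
The plan is to follow the proof of Theorem \ref{bourgain} essentially verbatim, simply upgrading the input space $\C$ to $\C^n$ and the target space $\C^2$ to $\C^m$; at no point in that argument did the specific dimensions play a role beyond furnishing Banach spaces in which to measure norms. Concretely, for trigonometric polynomials $f_k \in E_k$ (now $\C^n$-valued), I would introduce the auxiliary variable $\eta \in Q$ and set $f_\eta^k(\theta_1,\dots,\theta_k) := f_k(\theta_1 + N\eta, \dots, \theta_k + N^k\eta)$. Translation invariance of Haar measure on $Q^J$ (Fubini) yields, exactly as in (\ref{Fubini1})--(\ref{Fubini2}),
$$\int_Q \Bigl\|\textstyle\sum_{k=1}^J f_\eta^k\Bigr\|_{L^p(Q^J,\C^n)}^p d\eta = \Bigl\|\textstyle\sum_{k=1}^J f_k\Bigr\|_{L^p(Q^J,\C^n)}^p,$$
together with the analogous identity after applying $T_{\widetilde M}^k$ and integrating in $L^{p_0}(Q^J,\C^m)$.

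The heart of the argument, as before, is a uniform convergence claim in the spirit of (\ref{unifconv}): setting $F_\theta^k(\eta) := f_\eta^k(\theta_1,\dots,\theta_k)$ regarded as a $\C^n$-valued function of $\eta$ alone for each fixed $\vec\theta$, one needs
$$\int_Q \Bigl\|\textstyle\sum_{k=1}^J T_{\widetilde M}^k f_\eta^k\Bigr\|_{L^{p_0}(Q^J)}^{p_0} d\eta - \int_{Q^J} \Bigl\|\textstyle\sum_{k=1}^J T_{\widetilde M} F_\theta^k\Bigr\|_{L^{p_0}(Q,\eta)}^{p_0} d\vec\theta \;\longrightarrow\; 0$$
as $N \to \infty$. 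Expanding in Fourier series, the first object multiplies each coefficient $x_{l_1,\dots,l_k}$ by the matrix $M(l_k)$, while the second multiplies by $M(Nl_1+\cdots+N^kl_k) = M\bigl(l_k + \tfrac1N l_{k-1}+\cdots+\tfrac1{N^{k-1}} l_1\bigr)$. Since $l_k\neq 0$ by the defining condition on $E_k$, zero-homogeneity and continuity of $M$ off the origin force matrix convergence $M(Nl_1+\cdots+N^k l_k) \to M(l_k)$ in the operator norm on $\mathrm{Hom}(\C^n,\C^m)$, and the triangle inequality in $L^{p_0}(Q^J,\C^m)$ applied termwise to the finite Fourier expansion closes the claim.

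For each fixed $\vec\theta$, linearity of $T_{\widetilde M}$ together with the very definition of $\|T_{\widetilde M}\|_{L^p(Q,\C^n)\to L^{p_0}(Q,\C^m)}$ gives, applied to the single-variable polynomial $\eta \mapsto \sum_{k=1}^J F_\theta^k(\eta)$,
$$\Bigl\|\textstyle\sum_{k=1}^J T_{\widetilde M} F_\theta^k\Bigr\|_{L^{p_0}(Q,\C^m)} \leq \|T_{\widetilde M}\|\,\Bigl\|\textstyle\sum_{k=1}^J F_\theta^k\Bigr\|_{L^p(Q,\C^n)}.$$
Raising to the $p_0$-th power, integrating over $\vec\theta$, applying the concave Jensen inequality (exactly the step labelled (\ref{Jensen}), which requires $p_0/p\leq 1$), and then running the Fubini identity of the first paragraph in reverse recovers $\|\sum_k f_k\|_{L^p(Q^J,\C^n)}^{p_0}$ on the right-hand side. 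Sending $N\to\infty$ absorbs the error coming from the convergence claim.

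The only real novelty, and the main (mild) obstacle, is verifying the vector-valued convergence claim: one needs operator-norm, not merely componentwise, convergence $\|M(Nl_1+\cdots+N^k l_k) - M(l_k)\|_{\mathrm{op}}\to 0$. Componentwise continuity of $M$ off the origin together with finite-dimensionality of $\mathrm{Hom}(\C^n,\C^m)$ makes this automatic, so the entire argument is bookkeeping once one tracks $\C^n$- and $\C^m$-valued norms in place of scalar moduli throughout the proof of Theorem \ref{bourgain}.
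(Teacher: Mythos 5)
Your proposal is correct and matches what the paper does: the paper itself explicitly states that the matrix-valued theorems ``have nearly identical proofs to those in Section~\ref{lowerbound} so we omit the details,'' and your write-up correctly carries out that adaptation, tracking $\C^n$-valued inputs and $\C^m$-valued outputs through the Fubini identities, the uniform convergence claim (where operator-norm convergence of $M(Nl_1+\cdots+N^k l_k)\to M(l_k)$ follows from componentwise continuity, zero-homogeneity, and finite-dimensionality of $\mathrm{Hom}(\C^n,\C^m)$), the application of $\|T_{\widetilde M}\|$, and the concave Jensen step requiring $p_0\le p$. (One minor slip: in Theorem~\ref{bourgain} the target space is already $\C^m$, not $\C^2$, so only the input space is being upgraded from $\C$ to $\C^n$; this does not affect your argument.)
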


For $1 < p_0 \leq p < \infty$, denote $\mathscr{M}_{p,p_0}(\mathbb{M}_m)$ as the collection of $m \times m$ matrix valued multipliers whose components are real, even, continuous (except possibly at zero) and homogeneous order zero functions in $\R^n$ with $\|M\|_{\mathscr{M}_{p,p_0}(\R^n)} := \|T_M\|_{L^p \to L^p_0}$.  We will use the notation $\mathscr{M}_{p}(\mathbb{M}_m)$ for $\mathscr{M}_{p,p}(\mathbb{M}_m)$.

\begin{theorem}
\label{operatorlowerbound}
Suppose $M \in \mathscr{M}_{p,p_0}(\mathbb{M}_m)$.  If there exists $v^{\pm} \in \R^n$ such that $M(v^{\pm}) = \delta^{\pm} U$, where $U$ is a unitary matrix and $\delta^{+} > \delta^{-}$, then 
\begin{equation}
\|T_{M}\|_{L^{p}(\R^n,\C^m) \to L^{p_0}(\R^n,\C^m)} \geq \|T_{\widetilde{M}}\|_{L^{p}(\T^n,\C^m) \to L^{p_0}(\T^n,\C^m)} \geq \frac{UMD_{p,p_0}(\C^m)}{\frac{2}{\delta^{+} - \delta^{-}}\left(1+ \frac{|\delta^+ + \delta^-|}{|\delta^+| + |\delta^-|}\right)}. \nonumber
\end{equation}
\end{theorem}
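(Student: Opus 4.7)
My plan is to follow the blueprint of the vector-valued theorem proved in Section~\ref{lowerbound} and adapt it to the matrix setting. Three new features appear: the martingale differences take values in $\C^m$ instead of $\C$; the multiplier values at $v^\pm$ are scalar multiples of a \emph{common} unitary $U$, which must be pulled out of the $\C^m$-norm; and because $\delta^+$ and $-\delta^-$ need not be equal, the affine change of variables from the Rademacher signs $\beta_k\in\{\pm 1\}$ to the multiplier weights $\alpha_k\in\{\delta^+,\delta^-\}$ carries a nonzero constant term, which is precisely what produces the factor $1+|\delta^++\delta^-|/(|\delta^+|+|\delta^-|)$ in the stated denominator.

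After invoking the matrix transference lemma it suffices to estimate $\|T_{\widetilde M}\|$ from below. Fix $\varepsilon>0$ and choose $\vec\beta=(\beta_1,\dots,\beta_N)\in\{\pm 1\}^N$ together with Bochner integrable $d_k:\{\pm 1\}^k\to\C^m$ that come within $\varepsilon$ of the supremum defining $UMD_{p,p_0}(\C^m)$. Using continuity (away from the origin) and $0$-homogeneity, pick $n^\pm\in\Z^n$ with $M(n^\pm)$ as close to $\delta^\pm U$ as desired, and set $\psi^\pm(\theta):=\sign((n^\pm,\theta))$. Since $M$ is even and $0$-homogeneous, $M(jn^\pm)=M(n^\pm)$ for every nonzero $j$, so for any word $\psi_0,\dots,\psi_N$ with each $\psi_k\in\{\psi^+,\psi^-\}$,
\[
T_{\widetilde M}^k\bigl[\psi_k(\theta_k)\,d_k(\psi_0(\theta_0),\dots,\psi_{k-1}(\theta_{k-1}))\bigr]\;\longrightarrow\;\alpha_k\,U\,\psi_k(\theta_k)\,d_k(\psi_0(\theta_0),\dots,\psi_{k-1}(\theta_{k-1})),
\]
where $\alpha_k=\delta^+$ if $\psi_k=\psi^+$ and $\alpha_k=\delta^-$ otherwise.

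Applying the matrix analog of the Bourgain-type theorem stated just before this one, pulling the common unitary $U$ out of the $\C^m$-norm, and using that $(\psi_0(\theta_0),\dots,\psi_N(\theta_N))$ is equidistributed with the Rademacher sequence $(r_0,\dots,r_N)$, I obtain
\[
\Bigl\|\sum_{k=1}^N\alpha_k\,r_k\,d_k(r_0,\dots,r_{k-1})\Bigr\|_{p_0}\;\le\;(\|T_{\widetilde M}\|+o(1))\,\Bigl\|\sum_{k=1}^N r_k\,d_k(r_0,\dots,r_{k-1})\Bigr\|_p.
\]
Specializing to the two constant choices $\alpha_k\equiv\delta^+$ and $\alpha_k\equiv\delta^-$ and adding the resulting estimates yields the auxiliary bound
\[
(|\delta^+|+|\delta^-|)\,\Bigl\|\textstyle\sum r_k d_k\Bigr\|_{p_0}\;\le\;2\|T_{\widetilde M}\|\,\Bigl\|\textstyle\sum r_k d_k\Bigr\|_p.
\]
With $A=2/(\delta^+-\delta^-)$ and $B=-(\delta^++\delta^-)/(\delta^+-\delta^-)$ the identity $\beta_k=A\alpha_k+B$ produces $\sum\beta_k r_k d_k=A\sum\alpha_k r_k d_k+B\sum r_k d_k$; the triangle inequality combined with the two preceding displays gives
\[
\Bigl\|\textstyle\sum\beta_k r_k d_k\Bigr\|_{p_0}\;\le\;\|T_{\widetilde M}\|\,|A|\Bigl(1+\tfrac{|\delta^++\delta^-|}{|\delta^+|+|\delta^-|}\Bigr)\,\Bigl\|\textstyle\sum r_k d_k\Bigr\|_p.
\]
Taking the supremum over $\vec\beta$ and over martingales and sending $\varepsilon\to 0$ installs $UMD_{p,p_0}(\C^m)$ on the left and yields the stated lower bound.

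The main obstacle is ensuring that $T_{\widetilde M}^k$ really multiplies each slot by a single matrix $\alpha_k U$. This relies crucially on the hypothesis that $M(v^+)$ and $M(v^-)$ share the \emph{same} unitary factor $U$: otherwise the summands with $\psi_k=\psi^+$ and $\psi_k=\psi^-$ would be rotated by different unitaries, $U$ could not be pulled outside $\|\cdot\|_{\C^m}$, and the passage to a ratio of Rademacher sums would collapse. Everything else is bookkeeping strictly parallel to the vector-valued argument.
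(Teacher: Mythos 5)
Your proposal is correct and follows the route the paper itself gestures at (the paper omits the proof, claiming it is "nearly identical" to the vector-valued argument of Section~\ref{lowerbound}). You have, moreover, correctly supplied the two genuinely new ingredients that the paper leaves unstated: the observation that since $M(n^{\pm})\to\delta^{\pm}U$ with a \emph{common} unitary $U$, that unitary can be factored out of the $\C^m$-norm after applying the Bourgain-type transference theorem, and the affine reparametrization $\beta_k=A\alpha_k+B$ with the auxiliary bound $(|\delta^+|+|\delta^-|)\|\sum r_kd_k\|_{p_0}\le 2\|T_{\widetilde M}\|\|\sum r_kd_k\|_p$ (obtained from the two constant choices $\alpha_k\equiv\delta^{\pm}$), which together produce exactly the denominator $\frac{2}{\delta^+-\delta^-}\bigl(1+\frac{|\delta^++\delta^-|}{|\delta^+|+|\delta^-|}\bigr)$. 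When $\delta^+=-\delta^-$ the constant term $B$ vanishes and your argument collapses to the vector-valued one, as it should.
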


\begin{remark}
{\rm The main application of Theorem \ref{operatorlowerbound} will be to matices $M$ that are unitarily diagonalizable and satisfy the needed assumptions.  Recall, for example, that self-adjoint matrices are unitarily diagonalizable.  Also, a $2 \times 2$ matrix that is the sum of a skew-symmetric matrix and diagonal matrix is unitarily diagonalizable as well, as we will use as an application in the next section.}  
\end{remark}

\begin{remark}
{\rm Notice how in Theorem \ref{operatorlowerbound} if we use a multiplier $M$ that is nearly the identity matrix, but still satisfying all of the assumptions, then the estimate is quite bad, even though $\|T_M\|$ should be approximately $1$.  This suggests that there is room for improvement with our result.}
\end{remark}

\subsection{\bf Applications}\label{matrixapplications}

For the Ahlfors-Beurling operator defined as
$$B f(z) := p.v. -\frac{1}{\pi} \int_{\C} \frac{f(w)}{(z-w)^2} \, dm_{2}(w),$$ 
it is known that $\|B\|_p := \|B\|_{L^p(\C) \to L^p(\C)} \geq p^* -1,$  for all $p$ such that $1 < p < \infty,$ as originally shown by Lehto in \cite{Lehto} in 1965.  There has not been (to the authors knowledge) an alternate proof of Lehto's result, but it now follows as an easy application of Theorem \ref{operatorlowerbound}. 

\medskip
\begin{cor}
\label{Beurlinglowerbd}
$\|B\|_p \geq p^* -1,$ for all $p$ such that $1 < p < \infty$
\end{cor}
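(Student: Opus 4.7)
The plan is to rewrite $B$ as a real $2 \times 2$ matrix-valued Fourier multiplier and then apply Theorem \ref{operatorlowerbound} with a pair $v^\pm$ for which the geometric factor in that theorem collapses to $1$.

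First I would set up the dictionary between the complex scalar multiplier on $L^p(\C)$ and a real matrix multiplier on $L^p(\R^2,\R^2)$. Under $f = f_1 + i f_2 \leftrightarrow (f_1, f_2)$ we have $\|f\|_{L^p(\C)} = \|(f_1,f_2)\|_{L^p(\R^2,\R^2)}$. The Fourier symbol of $B$ is $m_B(\xi) = \overline{\xi}^{\,2}/|\xi|^2$; writing $m_B = a + i b$, both $a$ and $b$ are real, even, continuous on $\R^2 \setminus \{0\}$, and homogeneous of degree $0$. A short Fourier-side computation shows that $B$, transported to $(f_1, f_2)$, is the matrix multiplier $T_M$ associated with
$$M(\xi) := \begin{pmatrix} a(\xi) & -b(\xi) \\ b(\xi) & a(\xi) \end{pmatrix}.$$
Hence $M \in \mathscr{M}_{p,p}(\mathbb{M}_2)$, and $\|T_M\|_{L^p(\R^2,\C^2) \to L^p(\R^2,\C^2)} = \|B\|_p$, using that for a real-entried matrix multiplier the real and complex $L^p$ operator norms agree.

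Next I would locate $v^\pm$ as required in Theorem \ref{operatorlowerbound}. On $S^1$ we have $a^2 + b^2 \equiv 1$, so $M|_{S^1}$ is orthogonal-valued. Parameterising $\xi = e^{i\theta}$ gives $m_B(\xi) = e^{-2i\theta}$, which sweeps the full unit circle, so I can take $v^+ = (1,0)$ and $v^- = (0,1)$, giving $M(v^+) = I$ and $M(v^-) = -I$. These fit the hypothesis of the theorem with $\delta^\pm = \pm 1$ and unitary $U = I$. The geometric factor in the estimate then reduces to
$$\frac{2}{\delta^+ - \delta^-}\left(1 + \frac{|\delta^+ + \delta^-|}{|\delta^+| + |\delta^-|}\right) = \frac{2}{2}\left(1 + 0\right) = 1,$$
and the theorem yields $\|B\|_p \geq UMD_{p,p}(\C^2) = p^* - 1$, the last equality using that $\C^2$ is a Hilbert space.

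The only non-routine step is the opening dictionary between the scalar complex multiplier $m_B$ and the real matrix multiplier $M$ with matching operator norms; everything after that is direct substitution into Theorem \ref{operatorlowerbound}. The clean cancellation $\delta^+ + \delta^- = 0$ is what lets this route recover precisely Lehto's sharp constant rather than a lossy lower bound.
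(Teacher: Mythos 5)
Your proposal is correct and follows essentially the same route as the paper: represent $B$ as the $2\times 2$ real matrix-valued multiplier built from $\Re m_B$ and $\Im m_B$, observe that at $\xi = (1,0)$ and $(0,1)$ this matrix equals $\pm I$ so that $\delta^{\pm} = \pm 1$, and feed this into Theorem \ref{operatorlowerbound} together with $UMD_{p,p}(\C^2) = p^*-1$. The only difference is that you spell out explicitly that the geometric factor $\frac{2}{\delta^+-\delta^-}\bigl(1 + \tfrac{|\delta^++\delta^-|}{|\delta^+|+|\delta^-|}\bigr)$ collapses to $1$, which the paper leaves implicit.
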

\medskip

\begin{proof}
It is known that $-B = R_{2}^2 - R_{1}^2 + 2iR_{1}R_{2}$ ($R_1$ and $R_2$ are the planar Riesz transforms) and the multiplier corresponding to $B$ is $m(\xi) = \frac{\xi_{2}^2 - \xi_{1}^2 + 2i \xi_{1}\xi_{2}}{|\xi|^2} =: m_{R}(\xi) + i m_{I}(\xi).$  Let $M =\left(
  \begin{array}{ c c }
     m_{R}(\xi) & m_{I}(\xi) \\
     -m_{I}(\xi) & m_{R}(\xi)
  \end{array} \right)$ and $\widehat{f} = u + iv$.  Since we have that $m(\xi) \widehat{f} = M(\xi)\left(
  \begin{array} {c}
  u\\
  v
  \end{array} \right)$, then there exists an isomorphism taking $(m\widehat{f})^{\vee}$ to $(M\widehat{f})^{\vee}$ and preserving the norm.  So we can conclude that $\|B\|_p = \|T_M\|_{L^p(\C) \to L^p(\R^2)}$.  Observe that $m_{R}(1,0) = 1$ and $m_{R}(0,1) = -1$ are the maximum and minumum of $m_R$ on $\partial B(0,1)$ respectively.  Theorem \ref{operatorlowerbound} implies that $\|T_M\|_{L^p(\C) \to L^p(\R^2)} \geq p^* -1$.
\end{proof}

\begin{cor}
\label{QuantumCombOp}
$\|(R_{1}^2-R_{2}^2)\cos(\theta) + 2 R_1 R_2 \sin(\theta)\|_{L^p(\C) \to L^p(\C)} = p^* - 1$ for all $\theta$ and all $1 < p < \infty$
\end{cor}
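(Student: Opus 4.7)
The plan is to establish matching upper and lower bounds of $p^*-1$ for the operator
\begin{equation*}
T_\theta := (R_1^2-R_2^2)\cos\theta + 2R_1R_2\sin\theta.
\end{equation*}
First I would identify its Fourier multiplier. Since the symbol of $R_j$ is $-i\xi_j/|\xi|$, the symbol of $T_\theta$ is
\begin{equation*}
m_\theta(\xi) = -\,\frac{(\xi_1^2-\xi_2^2)\cos\theta + 2\xi_1\xi_2\sin\theta}{|\xi|^2},
\end{equation*}
which on the unit circle $\xi=(\cos\phi,\sin\phi)$ reduces to $-\cos(2\phi-\theta)$. Hence $m_\theta$ is real-valued, even, continuous off the origin, homogeneous of order zero, and attains the extreme values $\pm 1$ on $S^1$.

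For the lower bound I would apply Theorem \ref{operatorlowerbound} in the scalar $m=1$ case, viewing $m_\theta$ as a $1\times 1$ matrix-valued multiplier. Picking $v^\pm \in S^1$ with $m_\theta(v^\pm)=\pm 1$ and taking $U=1$, $\delta^\pm=\pm 1$, the constant appearing in Theorem \ref{operatorlowerbound} collapses, since
\begin{equation*}
\frac{2}{\delta^+-\delta^-}\Bigl(1+\tfrac{|\delta^++\delta^-|}{|\delta^+|+|\delta^-|}\Bigr) = 1\cdot(1+0) = 1,
\end{equation*}
so the theorem yields $\|T_\theta\|_{L^p(\C)\to L^p(\C)} \geq UMD_p(\C) = p^*-1$.

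For the matching upper bound I would exploit the rotational invariance of the problem. The rotation of $\R^2$ by angle $\theta/2$ performs the change of variables $\phi \mapsto \phi+\theta/2$ in the symbol, transforming $-\cos(2\phi-\theta)$ into $-\cos(2\phi)$, which is exactly the symbol of $R_1^2-R_2^2$. Because this rotation is an $L^p(\C)$-isometry that intertwines the two multiplier operators, $\|T_\theta\|_p = \|R_1^2-R_2^2\|_p$. The sharp bound $\|R_1^2-R_2^2\|_p \leq p^*-1$ for the second-order Riesz-transform difference is classical: it follows from a $\pm 1$-valued martingale transform representation of $R_1^2-R_2^2$ combined with Burkholder's sharp martingale-transform inequality (Ba\~nuelos--Wang, Geiss--Montgomery-Smith--Saksman, Nazarov--Volberg).

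The main obstacle is not technical but bibliographic: the upper bound is not proved inside this paper and must be imported from the sharp martingale-representation theory for second-order Riesz transforms. The novelty here is that the matching lower bound drops out cleanly from the matrix-valued framework of Theorem \ref{operatorlowerbound}, delivering the full equality $\|T_\theta\|_p = p^*-1$ for every $\theta \in \R$ and every $1<p<\infty$.
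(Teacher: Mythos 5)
Your argument is correct, and the lower-bound half is essentially the paper's: you identify the symbol as a real, even, homogeneous-of-order-zero function attaining $\pm 1$ on $S^1$ and feed it into Theorem \ref{operatorlowerbound} with $\delta^\pm=\pm 1$ and $U=1$, whereupon the denominator collapses to $1$ and the bound $UMD_p(\C)=p^*-1$ drops out. (The sign you track --- your $-\cos(2\phi-\theta)$ versus the paper's $\cos(2\phi-\theta)$ --- is immaterial since only the extreme values enter; you are the one handling the $-i\xi_j/|\xi|$ correctly.) Where you diverge is the upper bound. The paper cites Dragi\v{c}evi\'c--Volberg \cite{Volberg}, which treats the full $\theta$-family by a Bellman-function argument directly, whereas you first observe that the rotation of $\R^2$ by $\theta/2$ is an $L^p(\C)$-isometry conjugating $T_\theta$ to $T_0=R_1^2-R_2^2$, so the norm is $\theta$-independent, and then import the single endpoint estimate $\|R_1^2-R_2^2\|_p\le p^*-1$ (Nazarov--Volberg \cite{Nazarov}, Geiss--Montgomery-Smith--Saksman \cite{Saks}). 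Both routes are valid; yours makes the $\theta$-independence structurally transparent and needs only the $\theta=0$ estimate, at the cost of one extra (elementary) change-of-variables step. In either case the matching upper bound must be imported from outside this paper, exactly as you note.
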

\medskip 
  
\begin{proof}
If $|\xi|=1$ then the multiplier corresponding to the operator is given by $(\xi_{1}^2 - \xi_{2}^2)\cos\theta + 2 \xi_1 \xi_2 \sin\theta$ and can be written as $\cos(2\phi - \theta)$ by letting $\xi_1 = \cos\phi$ and $\xi_2 = \sin\phi$.  So the multipier will achive its maximum and minimum of $1$ and $-1$ on the unit circle.  By Theorem \ref{operatorlowerbound} we have the lower bound of the operator is $p^* -1$.  The upper bound comes from \cite{Volberg}.
\end{proof}
 
\medskip

\begin{assumption}
For the remainder of the paper let us assume that the Iwaniec conjecture ($\|B\|_p = p^* -1$ for all $1<p<\infty$) is true.  
\end{assumption}

\begin{cor}
\label{PertBeurling}
\begin{displaymath}
   \|c(R_{1}^2-R_{2}^2) + 2 iR_1 R_2 \|_{L^p(\C) \to L^p(\C)} = \left\{
     \begin{array}{lr}
       (p^*-1) &, \,0 < |c| < 1\\
       |c|(p^*-1) &, \, |c| \geq 1
     \end{array}
   \right.
\end{displaymath}
for all $p$ such that $1<p< \infty.$

\end{cor}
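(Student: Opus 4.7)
The plan is to realize $T := c(R_1^2 - R_2^2) + 2i R_1 R_2$ as a real $2\times 2$ matrix-valued Fourier multiplier (exactly as in the proof of Corollary \ref{Beurlinglowerbd}), derive the upper bound by writing its symbol as a linear combination of the Beurling symbol and its complex conjugate, and derive the lower bound by invoking the matrix-valued version of Theorem \ref{operatorlowerbound} with two different choices of the test directions $v^\pm$.

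Set $a(\xi) := (\xi_2^2 - \xi_1^2)/|\xi|^2$ and $b(\xi) := 2\xi_1 \xi_2/|\xi|^2$; these are real, even, continuous off the origin, homogeneous of degree zero, and satisfy $a^2+b^2 \equiv 1$. Under the identification $\C \cong \R^2$, $T$ has scalar symbol $c\,a - i\,b$ and corresponds to the real matrix symbol $M(\xi) = c\,a(\xi)\,I + b(\xi)\,J$ with $J = \begin{pmatrix} 0 & 1 \\ -1 & 0 \end{pmatrix}$, and the same isomorphism as in Corollary \ref{Beurlinglowerbd} gives $\|T\|_{L^p(\C) \to L^p(\C)} = \|T_M\|_{L^p(\R^2,\R^2) \to L^p(\R^2,\R^2)}$. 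For the upper bound I would decompose $c\,a - i\,b = \tfrac{c-1}{2}(a+ib) + \tfrac{c+1}{2}(a-ib)$; the two symbols $a \pm ib$ correspond (up to the sign convention of Corollary \ref{Beurlinglowerbd}) to $B$ and $\overline{B}$, each of $L^p$-norm $p^*-1$ by the Iwaniec assumption (the evenness of $a,b$ makes $\|\overline{B}\|_p = \|B\|_p$ automatic). The triangle inequality then yields $\|T\|_p \leq \tfrac{|c-1|+|c+1|}{2}(p^*-1) = \max(1,|c|)(p^*-1)$, which already matches the two claimed upper bounds in the corollary.

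For the lower bound, $M(\xi) = c a I + b J$ is normal, hence unitarily diagonalizable, so Theorem \ref{operatorlowerbound} applies. Taking $v^+ = (0,1)$ and $v^- = (1,0)$ (and, without loss of generality, $c > 0$) makes $b$ vanish and $a = \pm 1$, producing $M(v^\pm) = \pm c\, I$, so $\delta^\pm = \pm c$ with $U = I$, and the theorem yields $\|T\|_p \geq |c|(p^*-1)$. Taking instead $v^+ = (1,1)/\sqrt{2}$ and $v^- = (1,-1)/\sqrt{2}$ makes $a$ vanish and $b = \pm 1$, producing $M(v^\pm) = \pm J$, so $\delta^\pm = \pm 1$ with $U = J$, and the theorem now yields $\|T\|_p \geq p^*-1$, using $UMD_p(\C^2) = p^*-1$ since $\C^2$ is a Hilbert space. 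In both cases $\delta^+ + \delta^- = 0$, so the awkward correction factor $1 + |\delta^+ + \delta^-|/(|\delta^+|+|\delta^-|)$ in the denominator of the theorem collapses to $1$; taking the better of the two lower bounds gives $\|T\|_p \geq \max(1,|c|)(p^*-1)$, matching the upper bound. The main friction I anticipate is purely bookkeeping — tracking the sign conventions in the identification between the complex symbol and $M$, and the orientation choice for the second pair $v^\pm$ so that $\delta^+ > \delta^-$ — rather than any substantive new idea beyond what is already in Corollaries \ref{Beurlinglowerbd} and \ref{QuantumCombOp}.
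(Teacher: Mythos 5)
Your proof is correct and takes a genuinely different (and arguably cleaner) route than the paper's. The paper's proof proceeds by three cases: for $c \geq 1$ it applies the matrix lower bound theorem directly (using $v^\pm = (0,1),(1,0)$ so $M(v^\pm)=\pm c I$), and obtains the upper bound by writing $c(R_1^2-R_2^2)+2iR_1R_2 = (c-1)(R_1^2-R_2^2)+\overline{B}$ and invoking Nazarov--Volberg for $\|R_1^2-R_2^2\|_p$; for $0<c<1$ it reduces to the previous case by factoring and applying the $\pi/4$ rotation that sends the symbol $a$ to $b$ (and $b$ to $-a$); and for $c<0$ it uses the conjugation symmetry $\|T_m\|_p = \|T_{\overline m}\|_p$. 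Your approach avoids the case split entirely. For the upper bound, your single identity $ca - ib = \tfrac{c-1}{2}(a+ib) + \tfrac{c+1}{2}(a-ib)$ plus the triangle inequality yields $\max(1,|c|)(p^*-1)$ for all $c$ in one stroke, using only $\|B\|_p = \|\overline B\|_p = p^*-1$ (and the evenness argument for the second equality, which the paper also records). For the lower bound, applying the matrix theorem with two test pairs --- the coordinate directions giving $M(v^\pm)=\pm cI$ and hence $|c|(p^*-1)$, and the diagonals giving $M(v^\pm)=\pm J$ and hence $p^*-1$ --- is more uniform than the paper's reduction chain and exploits the structure of the theorem (a unitary $U$ other than $\pm I$, which the paper never actually uses in this corollary). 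The only blemishes in your write-up are cosmetic sign-convention slips (the correct real matrix is $caI - bJ$ with the paper's identification, and the second pair yields $M(v^\pm) = \mp J$, so one takes $U=-J$ or relabels $v^\pm$); neither affects the norms or the conclusion. Your route is also slightly more economical in external citations: the paper additionally invokes Nazarov--Volberg for the upper bound, which your convex-combination decomposition sidesteps.
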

\medskip

\begin{proof}
If $c \geq 1$, then it is easy to see that the operater is bounded below by $c(p^*-1)$, by simply using the same argument as in Corollary \ref{Beurlinglowerbd}.  The upper bound for the operator is $(c-1)\|R_1^2-R_2^2\|_p+\|B\|_p \leq c(p^*-1)$ by \cite{Nazarov} and since $\|B\|_p = p^* -1.$

\smallskip
If $0 < c < 1$, then we can write the operator as $c \|(R_{1}^2-R_{2}^2) + 2 i \frac{1}{c} R_1 R_2 \|_{p} = c \| \frac{1}{c} (R_{1}^2-R_{2}^2) + 2 i R_1 R_2 \|_{p}$, since $(R_{1}^2-R_{2}^2)$ is unitarily equivalent to $2 iR_1 R_2$ (using a rotation by $\frac{\pi}{4}$). Applying the case when $c \geq 1$ gives the desired result. 

\smallskip
For the cases where $c$ is negative we just need to observe that for multipliers of the form $m(\xi) = \frac{\left(\xi, A \xi \right)}{|\xi|^2}$, we have the property $(T_m f)(x) = \overline{(T_{\overline{m}}\overline{f})(x)}$.  This yields $\|T_m\|_p = \|T_{\overline{m}}\|_p$.  But, $\frac{c(\xi_{1}^2 - \xi_{2}^2) + 2 i  \xi_1 \xi_2}{|\xi|^2}$ is the multiplier corresponding the given operator and the conjugate corresponds to the operator $c(R_{1}^2-R_{2}^2) - 2 iR_1 R_2$.  So $\|c(R_{1}^2-R_{2}^2) + 2 iR_1 R_2\|_p = \|-c(R_{1}^2-R_{2}^2) - 2 iR_1 R_2\|_p = \|-c(R_{1}^2-R_{2}^2) + 2 iR_1 R_2\|_p$.  Using the two cases that we already have shown will give us the result for $c$ negative.
\end{proof}

\medskip  
\begin{cor}
\label{analytic}
Let $F(z) := (R_{1}^2-R_{2}^2) + 2 z R_1 R_2,$ then for all $p$ such that $1 < p < \infty$,  
 
 \begin{displaymath}
   \|F(z)\|_{L^p(\C) \to L^p(\C)} = \left\{
     \begin{array}{lr}
       \sqrt{1+z^2}(p^*-1), \,\, z \in \R\\
       (p^*-1), \,\, Re(z)=0, |z|<1\\
       |z|(p^*-1), \,\, Re(z)=0, |z| \geq 1
     \end{array}
   \right.
\end{displaymath}

\end{cor}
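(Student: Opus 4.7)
The plan is to treat each of the three regimes of $z$ separately and to reduce $F(z)=(R_1^2-R_2^2)+2zR_1R_2$ in every case to an operator whose norm has already been computed in Corollary \ref{QuantumCombOp} or Corollary \ref{PertBeurling}. Nothing new is needed beyond (i) parametrising the unit circle to rewrite a real trigonometric multiplier in canonical form, (ii) pulling scalar factors out of the operator, and (iii) the $\pi/4$–rotation unitary equivalence between the Fourier symbols $\xi_1^2-\xi_2^2$ and $-2\xi_1\xi_2$ that was already used in the proof of Corollary \ref{PertBeurling}.

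First I would dispose of the case $z\in\R$. Setting $\xi_1=\cos\varphi,\xi_2=\sin\varphi$ on the unit circle, the multiplier of $F(z)$ becomes (up to the universal sign coming from $R_j^2\leftrightarrow-\xi_j^2/|\xi|^2$)
\[
-\bigl[\cos(2\varphi)+z\sin(2\varphi)\bigr]\;=\;-\sqrt{1+z^2}\,\cos(2\varphi-\theta_z),\qquad \tan\theta_z=z.
\]
Hence $F(z)=\sqrt{1+z^2}\bigl[(R_1^2-R_2^2)\cos\theta_z+2R_1R_2\sin\theta_z\bigr]$, and Corollary \ref{QuantumCombOp} gives $\|F(z)\|_p=\sqrt{1+z^2}\,(p^*-1)$.

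For $z=ic$ purely imaginary with $|c|\ge 1$, I would simply factor: $F(ic)=c\bigl[\tfrac1c(R_1^2-R_2^2)+2iR_1R_2\bigr]$. Since $|1/c|\le1$, the first case of Corollary \ref{PertBeurling} (applied with constant $1/c$) yields that the bracketed operator has norm $p^*-1$, so $\|F(ic)\|_p=|c|(p^*-1)$. For $z=ic$ with $0<|c|<1$ I would invoke the $\pi/4$–rotation unitary $U$ under which $\xi_1^2-\xi_2^2\mapsto -2\xi_1\xi_2$ and $2\xi_1\xi_2\mapsto \xi_1^2-\xi_2^2$; conjugation by $U$ turns $F(ic)$ into $-2R_1R_2+ic(R_1^2-R_2^2)$, which has the same $L^p$-norm (after multiplication by the unimodular constant $i$ and a global sign) as $c(R_1^2-R_2^2)+2iR_1R_2$. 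With $|c|<1$, Corollary \ref{PertBeurling} hands back $p^*-1$, giving the stated value.

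There is no real obstacle; the only thing to be careful about is bookkeeping of signs and phases through the unitary equivalence, and to check that the two purely-imaginary sub-cases agree at $|c|=1$ (both formulas give $p^*-1$), ensuring consistency at the boundary. The case $z=0$ also trivially matches the real formula, since $\sqrt{1+0}\,(p^*-1)=p^*-1=\|R_1^2-R_2^2\|_p$ by Corollary \ref{QuantumCombOp} with $\theta=0$.
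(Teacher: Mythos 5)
Your proof is correct and follows essentially the same route as the paper: for $z\in\R$ you factor out $\sqrt{1+z^2}$, set $\tan\theta_z=z$, and invoke Corollary~\ref{QuantumCombOp}, exactly as in the text; for $z$ purely imaginary you reduce to Corollary~\ref{PertBeurling}, which is precisely what the paper asserts (``the other two cases follow directly''), and you merely spell out the reduction. The only cosmetic remark is that your $\pi/4$-rotation detour for $\mathrm{Re}(z)=0,\ |z|<1$ is unnecessary, since the same factoring $\|F(ic)\|_p=|c|\,\|\tfrac1c(R_1^2-R_2^2)+2iR_1R_2\|_p$ that you used for $|c|\ge 1$ also covers $|c|<1$ via the complementary branch of Corollary~\ref{PertBeurling}.
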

\medskip

\begin{proof}
If $z = x \in \R$ then $\|F(z)\|_p = \sqrt{1+x^2}\left\|\frac{1}{\sqrt{1+x^2}}(R_{1}^2-R_{2}^2) + \frac{x}{\sqrt{1+x^2}} 2 R_1 R_2 \right\|_{p} = \sqrt{1+x^2} (p^*-1)$ by making the substitution $x = \tan\theta$ and using Corollary \ref{QuantumCombOp}.  The other two cases follow directly from Corollary \ref{PertBeurling}. 
\end{proof} 

\bibliographystyle{amsplain}

\end{document}